\def\sqr#1#2{{\vcenter{\vbox{\hrule height.#2pt
        \hbox{\vrule width.#2pt height#1pt \kern#2pt
        \vrule width.#2pt}
        \hrule height.#2pt}}}}
\newcommand{\nc}{\newcommand}
\nc{\parent}[1]{$[\![#1]\!]$}
\newtheorem{theorem}{Theorem}[section]
\newtheorem{lemma}{Lemma}[section]
\newtheorem{example}{Example}[section]
\newtheorem{corollary}{Corollary}[section]
\newtheorem{proposition}{Proposition}[section]
\newtheorem{remark}{Remark}
\newtheorem{definition}{Definition}[section]
\newtheorem{assumption}{Assumption}[section]
\newenvironment{pf-main}{{\sc Proof of Theorem \ref{mainresult}.}\hspace{3mm}}{\qed}
\nc{\eid}{\stackrel{d}{=}}
\nc{\cadlag}{c\`{a}dl\`{a}g } \nc{\ba}{\begin{array}}
\nc{\ea}{\end{array}} \nc{\be}{\begin{equation}}
\nc{\ee}{\end{equation}} \nc{\bea}{\begin{eqnarray}}
\nc{\eea}{\end{eqnarray}} \nc{\bean}{\begin{eqnarray*}}
\nc{\eean}{\end{eqnarray*}} \nc{\bu}{\bullet} \nc{\nn}{\nonumber}
\nc{\cA}{{\mathcal A}} \nc{\cB}{{\mathcal B}} \nc{\cC}{{\mathcal
C}} \nc{\cD}{{\mathcal D}} \nc{\bbD}{\mathbb{D}}
\nc{\cG}{{\mathcal G}} \nc{\cF}{{\mathcal F}} \nc{\cS}{{\mathcal
S}} \nc{\cU}{{\mathcal U}} \nc{\cH}{{\mathcal H}}
\nc{\cK}{{\mathcal K}}\nc{\cL}{{\mathcal L}} \nc{\cW}{{\mathcal W}}  \nc{\cM}{{\mathcal
M}}\nc{\cN}{{\mathcal N}} \nc{\cO}{{\mathcal O}} \nc{\cP}{{\mathcal P}}
\nc{\bbE}{\mathbb{E}} \nc{\bbW}{\mathbb{W}}\nc{\bbF}{\mathbb{F}}
\nc{\bbEQ}{\mathbb{E}_{\mathbb{Q}}} \nc{\eps}{\varepsilon}
\nc{\bbEP}{\mathbb{E}_{\mathbb{P}}}\nc{\bbL}{\mathbb{L}}
\nc{\what}{\widehat} \nc{\bbP}{\mathbb{P}} \nc{\bbQ}{\mathbb{Q}}
\nc{\del}{\partial} \nc{\Om}{\Omega} \nc{\om}{\omega}
\nc{\bbR}{\mathbb{R}} \nc{\bbN}{\mathbb{N}} \nc{\fps}{$(\Om, \cF,
(\cF_t)_{t\geq 0}, \bbP)$} \nc{\bbC}{\mathbb{C}}
\nc{\bfr}{\begin{flushright}} \nc{\efr}{\end{flushright}}
\nc{\dXt}{\Delta X_{t}} \nc{\dXs}{\Delta X_{s}}
\nc{\bs}{\blacksquare} \nc{\dX}{\Delta X} \nc{\dY}{\Delta Y}
\nc{\dnkx}{\left(X(T^{n}_{k})-X(T^{n}_{k-1})\right)}
\nc{\esssup}{\mathrm{ess}\mbox{ }\mathrm{sup}}
\nc{\essinf}{\mathrm{ess}\mbox{ } \mathrm{inf}}
\nc{\dhats}{\widehat{\delta_s}} \nc{\half} {\frac{1}{2}}
\nc{\ch}{\hat{c}}
\nc{\ol}{\overline}
\def\rar{\rightarrow}
\nc{\chf}{\mbox{$\mathbf1$}}
\begin{document}

\title{Insider trading with penalties, entropy and quadratic BSDEs}
\author{Umut \c{C}etin}
\address{Department of Statistics, London School of Economics and Political Science, 10 Houghton st, London, WC2A 2AE, UK}
\date{\today}
\begin{abstract}
Kyle model in continuous time where the insider may be subject to legal penalties is considered. In equilibrium the insider internalises this legal risk by trading less aggressively. The equilibrium is characterised via the solution of a backward stochastic differential equation (BSDE) whose terminal condition is determined as the fixed point of a non-linear operator in equilibrium. The insider's expected penalties in equilibrium is non-monotone in the fee structure and is given by the relative entropy of the law of a particular $h$-transformation of Brownian motion. 
\end{abstract}
\maketitle

\section{Introduction}
Kyle \cite{Kyle} (together with its formulation in continuous time by Back \cite{Back92})  is the canonical model for analysing the dissemination of private information into  prices in financial markets.  In a market consisting of noise traders, competitive market makers and an informed trader (insider), it predicts that the insider gradually reveals all her private information to  the market in equilibrium. At the end of the finite horizon, the prices become fully informative in the sense that there is no remaining uncertainty about the private valuation of the insider. 

The Kyle model does not distinguish between legal informed trading, i.e. the investor paying a fee to obtain private information typically as a result of fundamental research, and insider trading when the trader receives private information from an insider via illegal means. As there is no legal penalties in the Kyle model, both types of informed traders behave identically. 

 Incorporation of legal risk into the Kyle model is a relatively less studied extension. The previous works in the literature typically assume a one period setting (see, e.g., Shin \cite{ShinIns}  and Carr\'e et al. \cite{CCDG}) with the exception of Kacperczyk and Pagnotta \cite{KPins} studying a two-period model\footnote{See also DeMarzo et al. \cite{DeMarzo} for another study of regulation of insider trading in a one-period model but outside Kyle's framework.}. Although some insiders trade in blocks, they typically split their orders over a relatively long trading period. Indeed, Kacperczyk and Pagnotta \cite{KPins} find that ``the median trader splits trades over a period equivalent to 71\% of the information horizon.'' Thus, understanding the impact of legal risk in continuous time is essential for the regulation of illegal insider trading.
 
  This paper analyses the impact of legal penalties on insider trading in a continuous time Kyle model.  Another notable distinction from the earlier works is  the fundamental value $V$ of the traded asset is allowed to have an arbitrary distribution while the aforementioned works assume a Gaussian or uniform distribution. Exi

Following \cite{ShinIns} and \cite{KPins} the insider pays  a quadratic penalty if caught by the authorities as a result of a successful investigation, in addition to losing all her previous gains from trading. The presence of legal penalties makes the resolution of equilibrium in closed form less likely. However, quadratic (or more general convex) penalties results in the optimal strategy of the insider being given in the feedback form as a function of the pricing rule to be determined in equilibrium. Section \ref{s:formal} shows that the value function of the insider is the sum of two components: while the first component is a solution of heat equation reminiscent of the value function of the Kyle model with no penalties, the second component is given by the solution of the following  quadratic BSDE (backward stochastic differential equation)
\be \label{e:Uintro}
dU_t=\sigma Z_t dB_t -\frac{1}{2c}Z_t^2 dt,
\ee
whose terminal condition is to be determined in equilibrium. Moreover, the optimal strategy of the insider coincides with a constant multiple of the process $Z$ appearing in the above BSDE, whose explicit solution is available once its terminal condition is established. 

Section \ref{s:fp} describes an operator whose fixed point, if it exists,  yields the terminal condition of the BSDE \eqref{e:Uintro}. This requires finding two convex functions $\phi$ and $\Psi$ such that
\be\label{e:identitiesintro}
\begin{split}
\int_{\bbR}\exp\Big(\frac{yv-\phi(y)}{\ch}\Big) \frac{1}{\sqrt{2\pi\sigma^2}}\exp\Big(-\frac{y^2}{2\sigma^2}\Big)dy&=\exp\Big(\frac{\Psi(v)}{\ch}\Big), \\
\int_{f(\bbR)}\exp\Big(\frac{yv-\Psi(v)}{\ch}\Big)\Pi(dv)&=\exp\Big(\frac{\phi(y)}{\ch}\Big), \qquad y\in \bbR,
\end{split}
 \ee
where $\Pi$ is the distribution of $V$, $v$ is an arbitrary element of the support of $\Pi$, and $\ch$ and $\sigma$ are constants. The above describes a non-linear operator whose fixed point will determine $\phi$, and, consequently $\Psi$. Under the assumption that $V$ is bounded, Theorem \ref{t:fpbd} establishes the existence of a fixed point $\Phi$ that is convex. However,  boundedness of the asset value is not a necessary condition for the existence: Theorem \ref{t:fpG} explicitly computes the fixed point when $V$ has a Gaussian distribution.

Section \ref{s:eq} studies the properties of equilibrium under the assumption that there exist a fixed point of the operator described by \eqref{e:identitiesintro}. A remarkable property of the equilibrium  is that the insider's strategy depends on the distribution of the fundamental value of the asset. This is a striking difference from the previous extensions of the Kyle model with risk neutral agents and no penalties, where the insider always employs the same bridge strategy in equilibrium irrespective of the asset distribution\footnote{A non-exhaustive list includes \cite{Back92}, \cite{CCDbp}, \cite{CCDdef}, \cite{DMB-CD},  \cite{CetRH}, and \cite{BEopttr}.}. 

The analysis of the equilibrium in Section \ref{s:eq} also reveals that all equilibrium parameters such as insider's wealth and noise traders' loss have direct representations in terms of $\Psi$, $\phi$ and its derivatives. In particular, it shows that equilibrium price is given by the solution of a heat equation with terminal value given by the first derivative of $\phi$. Moreover,  variance of the fundamental value after all trading is complete is non-zero and corresponds to an expectation involving the second derivative of $\phi$ acting on Brownian motion. 

Another remarkable consequence of the form of the solution $(U,Z)$ of \eqref{e:Uintro} and the description of the optimal strategy of the insider via $Z$ is that it reveals a connection with the  BSDE \eqref{e:Uintro} and a particular  $h$-transform of Brownian motion. Proposition \ref{p:entropy} studies this connection and shows that the expected penalty of the insider in equilibrium is a constant multiple of the relative entropy, which is finite. 

When the asset value has a Gaussian distribution, the equilibrium can be computed explicitly. The second part of Section \ref{s:eq} is devoted to the equilibrium with Gaussian payoffs. As expected, the insider's wealth is decreasing as the intensity of penalties decrease. However, this does not follow from the plausible explanation that expected penalties that are incurred in equilibrium are very high as the regulators impose higher fines. Instead, the presence of legal risk forces the insider moderate her trades and trade very little in case of high legal fines. Although the insider continue to buy (resp. sell) when the market valuation is below (resp. above) the fundamentals, the amount that she trades decrease in presence of higher fines. As a result, the expected penalties in equilibrium remain bounded even if the regulatory fines increase without bound. That the insider trades less aggressively aligns with the predictions of Shin \cite{ShinIns}, Kacperczyk and Pagnotta \cite{KPins}, as well as  
 Carr\'e et al. \cite{CCDG}), and agrees with the empirical evidence presented in \cite{KPins}.
 
 The outline of the paper is as follows. Section \ref{s:setup} presents the model assumptions. Section \ref{s:formal} gives a recipe for constructing the equilibrium and makes the connection with a quadratic BSDEs. Section \ref{s:fp} studies the fixed point of a nonlinear operator that yields the terminal condition of the BSDE via equilibrium constraints. Section \ref{s:eq} analyses the properties of equilibrium and its connection with an $h$-transform of Brownian motion. Finally, Section \ref{s:c} concludes and gives directions for future research.

\section{The setup} \label{s:setup}
As in \cite{Back92}, the trading will take place over the time interval $[0,1]$ and the risk free interest rate is set to $0$. Let $(\Omega , \cG , (\cG_t)_{t \in [0,1]} , \bbQ)$ be a filtered probability space satisfying the usual conditions.   The fundamental value of this asset equals $V$, which  will become public knowledge at time $1$. Moreover, $V$ has a finite variance with possibility of atoms.  This in particular implies the existence of a non-decreasing $f$ such that $V=f(\eta)$, where $\eta$ is a standard Normal random variable. The distribution of $V$ is denoted by $\Pi$. That is, $\bbQ(V\in dv)=\Pi(dv)$ for $v$ in the support of $V$, which is denoted by $f(\bbR)$.

Three types of agents  trade in the market. They differ in their information sets and objectives as follows and   the reader is referred to \cite{CD-GKB} for further details.

\begin{itemize}
	\item \textit{Noise/liquidity traders} are non-strategic and  trade for liquidity reasons, and
	their total demand at time $t$ is given by $\sigma B$ for a standard $(\cG_t)$-Brownian motion $
	B$ independent of $V$, and constant $\sigma>0$.
	\item \textit{Market makers} only  observe   the total demand
	\begin{equation*}
		Y=\theta+\sigma B, 
	\end{equation*}
	where $\theta$ is the demand process of the informed trader. The admissibility condition imposed later on $\theta$ will entail in particular that $Y$ is a semimartingale. 
	
	The market makers compete in a {\em Bertrand fashion} and clear the market. Similar to \cite{Back92} the market makers set the price $S$ as a
	function of the total order process at time $t$, i.e.
	%	consider pricing functionals $
	%	S\left( Y_{[0,t]},t\right) $ of the following form%
	%	\begin{equation} \label{mm:e:rule_mm}
		%	S\left( Y_{[0,t]},t\right) = H\left(t, X_t\right), \qquad \forall t\in [0,1)
		%	\end{equation}
	\begin{equation} \label{mm:e:rule_mm}
		S_t = H(t, Y_t), \qquad \forall t\in [0,1).
	\end{equation}
	\item \textit{The informed investor} observes the price process $S$ given by
	$S_{t}=H(t, Y_t)$  as well as $V$. Different than \cite{Back92} the informed trader incurs quadratic transaction costs which in particular implies that her trading strategy is absolutely continuous, i.e. 
	\[
	d\theta_t=\alpha_t dt
	\]
	for some $\alpha$ adapted to her own filtration. The accumulated transaction cost by time $t$ is given by
	\[
	C_t:=\frac{c}{2}\int_0^t \alpha^2_s ds.
	\]
	for some $c >0$, that will be called {\em the rate} of penalties in the sequel.
	Since she is
	risk-neutral, her objective is to maximize the expected final
	wealth, i.e.
	\begin{align}
		\sup_{\theta \in \mathcal{A}}E^{0,v}\left[ W_{1}^{\theta
		}\right], \mbox{ where} \label{eq:ins_obj}
		\\
		W_{1}^{\theta
		}=
		(V-S_{1-})\theta _{1-}+\int_{0}^{1-}\theta _{s}dS_{s} - \frac{c}{2}\int_0^1 \alpha^2_s ds= \int_0^1(V-S_s)\alpha_sds- \frac{c}{2}\int_0^1 \alpha^2_s ds. \label{mm:eq:insW}	\end{align}
	In above $\mathcal{A}$ is the set of admissible trading strategies
	for the given pricing rule, which will be defined in Definition \ref{mm:d:iadm}. Moreover, $E^{0,v}$ is the expectation with respect to $P^{0,v}$, which is the regular conditional distribution of  $(B_s, V; s\leq 1)$ given $B_0=0$ and $V=v$, which exists due to Theorem 44.3 in \cite{Bauer}.
	
	The expected wealth formulation above can also arise in case the informed trader is an illegal inside trader  who faces the risk of investigation that will result in penalties. Indeed, suppose that an investigation can successfully identify illegal inside trading with probability $p$, after which the insider not only loses her gains from trade but also pay a legal penalty of $k\int_0^1\alpha^2_t dt$. The expected profit of the insider\footnote{It is assumed that the invesitigation takes place after traded are over as in the case of  a whistleblower.} under this scenario is
	\[
	E^{0,v}\bigg[(1-p)\int_0^1(V-S_s)\alpha_sds- pk\int_0^1 \alpha^2_s ds\bigg]= (1-p)E^{0,v}\bigg[\int_0^1(V-S_s)\alpha_sds- \frac{pk}{1-p}\int_0^1 \alpha^2_s ds\bigg].
	\]
	Thus the coefficient $c$ in \eqref{mm:eq:insW} can be associated with $\frac{pk}{1-p}$, which gets large as the probability of a successful investigation gets bigger. 
\end{itemize}
 Given the above market structure, one can now  define the filtrations of the market makers and of the informed trader.    First  define $\cF:=\sigma(B_t,V; t \leq 1)$.   Due to the measurability of regular conditional distributions one can define  the probability measure $\bbP$ on $(\Om, \cF)$  by
 \be \label{mm:d:bbP}
 \bbP(E)=\int_{f(\bbR)} P^{0,v}(E) \Pi(dv),
 \ee
 for any $E \in \cF$.  
 
 The market makers' filtration, denoted by $\cF^M$, will be the right-continuous augmentation with the $\bbP$-null sets of the filtration generated by $Y$. In particular $\cF^M$ satisfies the usual conditions. 
 
 On the other hand, augmented the insider's filtration with the $P^{0,v}$-null sets will create an extra dependence on the value of $V$ purely for technical reasons. To avoid this,  we assume that the informed trader's filtration, denoted by $\cF^I$, is the right continuous augmentation\footnote{See  Section 3 of \cite{GTMP} for a recipe of the procedure.} of the filtration generated by $S$ and $V$  with the sets of
 \be \label{mm:e:nullI}
 \cN^I:=\{E\subset \cF: P^{0,v}(E)=0, \, \forall z\in \bbR\}.
 \ee
 Similarly,  $\cF^{B,V}$ will denote the right continuous augmentation of the filtration generated by $B$ and $V$  with the sets of $\cN^I$. Note that the resulting filtrations are {\em not} complete. Moreover, $\cF^I=\cF^{B,V}$ when $H$ is strictly increasing in total order, which will be the case when $H$ is an {\em admissible pricing rule} per Definition \ref{mm:d:prule}. 
 
 An equilibrium is a pair consisting of
 an \emph{admissible} pricing rule and an \emph{admissible}
 trading strategy such that: \textit{a)} given the pricing rule
 the trading strategy is optimal, \textit{b)} given the trading
 strategy,  the pricing rule is {\em rational} in the following sense:
 \be \label{mm:d:mm_obj}
 H(t,Y_t)=S_t=\mathbb{E}[ V|\mathcal{F}_t^M],
 \ee
 where $\bbE$ corresponds to the expectation operator under $\bbP$.
 To formalize
 this definition of equilibrium, one needs the notions of admissible
 pricing rules and trading strategies.

 \begin{definition}\label{mm:d:prule} An {\em admissible
 		pricing rule} is any function $H$ such that  $H \in C^{1,2}([0,1) \times \bbR)\cap C([0,1] \times \bbR) $, and  $x \mapsto H (t,x)$ is strictly increasing for every $t\in [0,1]$.
 \end{definition}
 \begin{definition} \label{mm:d:iadm}
 	An $\cF^{B,Z}$-adapted $\theta$ is said to be an  admissible trading
 	strategy for a  given admissible pricing rule $H$  if the following conditions are stisfied:
 	\begin{enumerate} \item $\theta$ is  absolutely continuous and of finite variation; that is
 		\[
 		\theta_t=\int_0^t \alpha_sds \mbox{ and } \int_0^1 |\alpha_t|dt <\infty, \, P^{0,v}\mbox{-a.s.,}
 		\]
 		 for some adapted $\alpha$ for each $v \in f(\bbR)$.
 		\item No doubling strategies are
 		allowed, i.e. for all $v \in f(\bbR)$  		\begin{equation}
 		E^{0,v}\int_{0}^{1}(H^2(s,Y_s)+\alpha^2_s)ds<\infty.
 			\label{mm:e:theta_cond_2}
 		\end{equation}
 	 The set of admissible trading strategies for the  given pricing rule $H$ is denoted with $\mathcal{A}(H)$.
 	\end{enumerate}
 \end{definition} 
 \begin{remark} The square integrability condition on $H$ is standard in the literature on the Kyle-Back models. The new condition on $\alpha$ is almost redundant since the expected transaction costs become infinite when it is not satisfied.  However, it could be possible for the insider to manipulate the prices so that the gains from trading $\int_0^{1}(V-S_t)\alpha_t dt$ is bigger than the loss. The extra condition prevents such doubling strategies. 
 	
 On the other hand, the total wealth at the end of trading is given by
 \[
 \int_0^1 (V-H(t,Y_t))\alpha_tdt -\frac{c}{2} \int_0^1 \alpha^2_tdt\leq \int_0^1 (V-H(t,Y_t))^2dt +\frac{1-c}{2}\int_0^1\alpha_t^2dt.
 \]
 Thus, if the penalties are high enough, i.e. $c>1$, the insider will never use strategies that are not square integrable if the prices are. In such case, the condition on $\alpha$ is indeed redundant.
 \end{remark}
The market equilibrium can now be defined as follows.
 \begin{definition} \label{eqd} A couple $(H^{\ast}, \theta^{\ast})$
 	is said to form an equilibrium if $H^{\ast}$ is an admissible pricing rule,
 	$\theta^{\ast} \in \cA(H^{\ast})$, and the following conditions are
 	satisfied:
 	\begin{enumerate}
 		\item {\em Market efficiency condition:} given $\theta^{\ast}$,
 		$H^{\ast}$ is a rational pricing rule, i.e. it satisfies \eqref{mm:d:mm_obj}.
 		\item {\em Insider
 			optimality condition:} given $H^{\ast}$, $\theta^{\ast}$ solves
 		the insider optimization problem for all $v\in f(\bbR)$:
 		\begin{equation*}
 			E^{0,v}[W^{\theta^{\ast}}_1] = \sup_{\theta \in \cA(H^{\ast})} E^{0,v} [W^{\theta}_1]<\infty.
 		\end{equation*}
 	\end{enumerate}
 \end{definition}
 \section{A recipe for equilibrium}\label{s:formal}
 Since the market maker's pricing rule leads to the martingale property of $S$ and one typically expects the insider's strategy to be inconspicuous, i.e. $Y$ is a Brownian motion in its own filtration, in view of  the past literature, it is not unreasonable to restrict oneself to $H$ satisfying a heat equation:
 \be \label{e:Hpde}
 H_t + \frac{\sigma^2}{2}H_{yy}=0, \qquad t \in [0,1].
 \ee
 Keeping the above equation in mind a formal HJB equation for the value function of the insider will be obtained in the first part of this section. To this end define
 \[
 J(t,y) =\sup_{\alpha\in \cA(H)} E^{0,v}\bigg[\int_t^1 (v-H(u,Y_u))\alpha_udu-\frac{c}{2}\int_t^1\alpha^2_tdt\Big|Y_t=y\bigg].
  \]
 Direct calculations lead to
 \[
 J_t +\frac{\sigma^2}{2} J_{yy} +\sup_{\alpha} \Big\{\alpha (J_y +v-H) -\frac{c\alpha^2}{2}\Big\}=0.
 \] 
 Note that the presence of the penalty  factor $c>0$ yields that the optimal $\alpha$ can be obtained in the feedback form:
 \be \label{e:alphop}
 \alpha^* = \frac{J_y(t,y) + v -H(t,y)}{c}.
 \ee
 This leads to the following HJB equation:
 \be \label{e:HJB}
 J_t +\frac{\sigma^2}{2}  J_{yy} + \frac{ (J_y +v-H)^2}{2c}=0.
 \ee
 Recall that in the classical Kyle model $c=0$, i.e. no penalty,  the optimal strategy cannot be obtained in the feedback form from the HJB equation and instead one has $J_y= H-z$, which is one should expect when letting $c\rar 0$ in \eqref{e:alphop} to arrive at a finite limit (see Section 6.2 in \cite{DMB-CD}).
 
 Next suppose there exists a smooth function $J^0$ such that 
 \be \label{e:J_0}
 \begin{split}
 	J^0_t + \frac{\sigma^2}{2} J^0_{yy}&=0,\\
 	J^0_y &= H-v,\\
 	J^0(1,y)&=\int_{h^{-1}(v)}^y (h(x)-v)dx+ \mbox{ constant},
 \end{split}
 \ee
 for some $h$ to be determined and a constant possibly depending on $v$. Such a function exists and is the value function of the insider in the classical Kyle model with $c=0$ whenever the pricing rule (not necessarily the equilibrium one)  satisfies  \eqref{e:Hpde} (see Theorem 5.1 in \cite{CD-GKB}). In this case, $h=H(1,\cdot)$ and the  constant is zero.
 
 Thus, if one defines $u= J-J^0$ and conjectures that $J(1,\cdot)\equiv 0$, one obtains
 \be \label{e:pdeu}
 u_t + \half \sigma^2u_{yy}+ \frac{1}{2c}u_y^2=0, \quad u(1,y)=-j^0(y,v):=-J^0(1,y).
 \ee
  Note that $J^0$ would be the value function of the insider had there been no penalties on her trading strategies, i.e. $c=0$. Thus, $-u$ can be viewed as the expected additional transaction costs (or legal penalties) to the insider in equilibrium.

 It is now a common folklore that the above has a backward stochastic differential equation (BSDE) formulation:
 \be \label{e:BSDE}
 dU_t=\sigma Z_t dB_t -\frac{1}{2c} Z_t^2 dt, \quad U_1= u(1,\sigma B_1).
 \ee
 The above quadratic BSDE is known to have an explicit solution, which in turn yields
 \be \label{e:usoln}
 u(t,x)= c\sigma^2\log \rho(t,x,v), \quad \rho(t,x,v):=E^{0,v}\Big[\exp\Big(-\frac{j^0(\sigma B_1,v)}{c\sigma^2}\Big)\Big|\sigma B_t=x\Big].
 \ee
 It immediately  follows from Theorem 4.3.6 in \cite{KS} that for each $v \in f(\bbR)$,  $\rho(\cdot, \cdot, v) \in  C^{1,2}([0,1)\times \bbR])$..

 In view of  \eqref{e:alphop} and  \eqref{e:J_0}, the above implies the optimal control of the insider at time $t \in [0,1]$ is $\alpha^*(t,Y_t,V)$, where 
 \be \label{e:alpha*}
 \alpha^*(t,y,v):=\sigma^2\frac{\rho_y(t,y,v)}{\rho(t,y,v)}.
 \ee
 Recall that the above assumed that the pricing rules satisfied  \eqref{e:Hpde}. Combined with the requirement that prices follow martingales in market makers' filtration in equilibrium this in particular yields that $Y$ must be a Brownian motion in its own filtration.  Observe that if $\kappa(t) \rho(t, Y_t, v)\Pi(dv) =\bbP(V\in dv|\cF^Y_t)$ for some constant $\kappa(t)$ for each $t$, then $Y$ is a Brownian motion in its own filtration (see, e.g., Corollary 3.1 in \cite{DMB-CD}) since
 \[
 \bbE\Big[\frac{\rho_y(t,Y_t,V)}{\rho(t,Y_t,V)}\Big|\cF^Y_t
 \Big]=\int_{f(\bbR)}\rho_y(t,Y_t,v)\kappa(t)dv=\frac{d}{dy}\int_{f(\bbR)}\rho(t,y,v)\kappa(t)dv\Big|_{y=Y_t}=0,
  \]
  provided one can interchange the order of integration and differentiation.
  
 To this end, suppose that  $\rho^0(t,y,v):=\kappa(t)\rho(t,y,v)$ is the conditional density of $V$  with respect to the measure $\Pi$ given $\cF^Y_t$ and $Y_t=y$. Then, in view of Theorem 3.3 in \cite{DMB-CD}, one expect $\rho^0(\cdot,\cdot, v)$ to satisfy for each $v$
 \be \label{e:rho0}
 \rho^0_t +\half \sigma^2 \rho^0_{yy}=0.
 \ee
 On the other hand,  $\rho$ satisfies \eqref{e:rho0} by its construction. Therefore, $\frac{\kappa'}{\kappa}\equiv 0$, and consequently, $\kappa$ must be constant since it must be strictly positive. This leads to the following:
 \begin{proposition}\label{p:rhodens}
Suppose that   there exists a continuous function $j^0: \bbR\times f(\bbR)\to \bbR$ such that $\kappa \rho(0,0,\cdot)\equiv 1$ for some normalising constant $\kappa$, where $\rho$ is defined via \eqref{e:usoln}. Assume further that there exists a unique strong solution on $(\Om, \cG, (\cG_t)_{t\in [0,1]}, \bbQ)$ to 
\[
Y_t= B_t +\int_0^t \frac{\rho_y(s,Y_s,V)}{\rho(s,Y_s,V)}ds
\]
such that 
\be \label{e:Zsquareint}
\bbE^{\bbQ}\Big[\int_0^t\Big(\frac{\rho_y(s,Y_s,V)}{\rho(s,Y_s,V)}\Big)^2\Big]<\infty, \quad t\in [0,1].
\ee
Then, 
\[
\kappa \rho(t,Y_t,v)\Pi(dv)= \bbP(V\in dv|\cF^Y_t), \qquad t\in [0,1].
\]
 \end{proposition}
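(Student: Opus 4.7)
The plan is to combine It\^o's formula, Girsanov's theorem, and the Bayes (Kallianpur--Striebel) formula. I first introduce a reference measure $\bbP^{0}$ under which $Y$ is a Brownian motion (of the appropriate scale) independent of $V\sim\Pi$; the square-integrability condition \eqref{e:Zsquareint} provides Novikov's criterion and so validates this Girsanov change of measure. Because $\rho(\cdot,\cdot,v)$ satisfies the backward heat equation $\rho_t+(\sigma^2/2)\rho_{yy}=0$ by virtue of the Feynman--Kac representation \eqref{e:usoln}, applying It\^o's formula to $\rho(t,Y_t,v)$ along the SDE for $Y$ cancels the drift terms and leaves
\[
d\rho(t,Y_t,v)=\rho_y(t,Y_t,v)\,dY_t.
\]
Hence $\rho(\cdot,Y_\cdot,v)$ is a $\bbP^{0}$-local martingale, promoted to a true martingale by the integrability hypothesis, and the normalization $\kappa\rho(0,0,v)=1$ identifies $\kappa\rho(t,Y_t,v)$ as a bona fide Radon--Nikodym density. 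Girsanov applied to this density turns $Y$ into a diffusion with drift $\rho_y/\rho(\cdot,Y_\cdot,v)$, which by the strong uniqueness of the SDE must coincide with $P^{0,v}$ on $\cF^Y_t$, yielding $dP^{0,v}/d\bbP^{0}|_{\cF^Y_t}=\kappa\rho(t,Y_t,v)$.

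Using the disintegration $\bbP=\int P^{0,v}\,\Pi(dv)$ together with the $\bbP^{0}$-independence of $V$ from $\cF^Y_t$, Fubini gives for every bounded measurable $g$ and every $A\in\cF^Y_t$
\[
\bbE_\bbP[g(V)\chf_A]=\int g(v)\,P^{0,v}(A)\,\Pi(dv)=\bbE_{\bbP^{0}}\Bigl[\chf_A\int g(v)\,\kappa\rho(t,Y_t,v)\,\Pi(dv)\Bigr].
\]
This is the Kallianpur--Striebel formula and already produces the candidate conditional density $\kappa\rho(t,Y_t,v)\,\Pi(dv)$ up to the overall normalizer $N_t:=\int\kappa\rho(t,Y_t,v)\,\Pi(dv)$.

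The remaining and main obstacle is to verify $N_t\equiv 1$ almost surely. I set $\tilde N(t,y):=\int\rho(t,y,v)\,\Pi(dv)$; by dominated convergence and the heat equation for each $\rho(\cdot,\cdot,v)$, $\tilde N$ satisfies the backward heat equation, and the hypothesis $\kappa\rho(0,0,\cdot)\equiv 1$ gives $\kappa\tilde N(0,0)=1$. To upgrade this pointwise identity to $\tilde N\equiv 1/\kappa$ I run the PDE argument preceding the proposition (around \eqref{e:rho0}) in reverse: if a product form $\kappa(t)\rho(t,y,v)\Pi(dv)$ is to be a conditional law compatible with both the heat equation satisfied by $\rho$ and the filtering dynamics of $Y$, matching forces $\kappa'(t)\equiv 0$, and together with the initial value this yields $\kappa(t)\equiv\kappa$, hence $\tilde N\equiv 1/\kappa$ on the support of $Y_t$---which is all of $\bbR$---by continuity of $\tilde N(t,\cdot)$ granted by the smoothness of $\rho$ (Theorem~4.3.6 in \cite{KS}, cited right after \eqref{e:usoln}). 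The delicate steps are the interchange of integration and differentiation defining $\tilde N$, and the promotion of the relevant local martingales to true ones so that the change-of-measure identifications are clean.
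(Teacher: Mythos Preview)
The paper's own proof is a single citation: it invokes Theorem~3.3 of \cite{DMB-CD}, noting that the martingale problem for the pair $(Y,V)$ is trivially well-posed because $V$ is static. You instead attempt a self-contained filtering argument via Girsanov and the Kallianpur--Striebel formula, which is a natural route and likely parallels what underlies the cited theorem. However, there are two genuine gaps.

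First, the square-integrability condition \eqref{e:Zsquareint} does \emph{not} yield Novikov's criterion, which requires $\bbE\Big[\exp\Big(\tfrac12\int_0^1\alpha_s^2\,ds\Big)\Big]<\infty$, not merely $\bbE\Big[\int_0^1\alpha_s^2\,ds\Big]<\infty$. This gap is reparable: rather than constructing $\bbP^{0}$ from $\bbQ$ by Girsanov, take $\bbP^{0}$ as the a~priori measure under which $Y$ is Brownian and $V\sim\Pi$ is independent, and observe directly from the Feynman--Kac representation \eqref{e:usoln} that $\rho(t,Y_t,v)$ is a true $\bbP^{0}$-martingale (it is a conditional expectation). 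Then $\kappa\rho(t,Y_t,v)$ is a legitimate density without any appeal to Novikov, and Girsanov in this direction together with the assumed strong uniqueness identifies the resulting law on $\cF^Y_t$ with $P^{0,v}|_{\cF^Y_t}$.

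Second, and more seriously, your argument for $N_t\equiv 1$ is circular. Knowing that $\tilde N$ solves the backward heat equation together with the single value $\kappa\tilde N(0,0)=1$ does not determine $\tilde N$; the backward heat equation is fixed by its \emph{terminal} data $\tilde N(1,y)=\int\exp\big(-j^0(y,v)/\ch\big)\,\Pi(dv)$, and the proposition's hypotheses do not assert that this is constant in $y$ (that condition appears only later, as item~(i) in Step~(1) of the recipe at the end of Section~\ref{s:formal}). The ``reverse PDE argument'' you invoke is the heuristic discussion preceding the proposition, which \emph{assumed} the product form $\kappa(t)\rho$ for the conditional density in order to deduce $\kappa'\equiv 0$; running it backwards presupposes what you are trying to prove. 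The cited Theorem~3.3 handles this through well-posedness of the martingale problem for $(Y,V)$: uniqueness of the joint law pins down the filter and simultaneously forces $Y$ to be Brownian in its own filtration, which is equivalent to $N_t\equiv 1$. Your Kallianpur--Striebel computation only delivers the normalized density $\kappa\rho/N_t$; closing the gap to $\kappa\rho$ itself requires this external input or an independent proof that $\tilde N(1,\cdot)$ is constant.
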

\begin{proof}
	This is an immediate consequence of Theorem 3.3 in \cite{DMB-CD}. Note that the process $Z$ therein coincides with $V$ at all times under the setting of the present paper. Thus, the  martingale problem $\tilde{A}$, i.e., the martingale problem for the pair $(Y, V)$ is trivially well-posed. 
\end{proof}
 The seemingly strong integrability condition \eqref{e:Zsquareint} is in fact a condition on the square integrability of $Z$ in the BSDE representation \eqref{e:BSDE}. A minimal integrability condition as in the next result ensures that  holds.
 \begin{proposition} \label{p:Zui}
 	Consider $\rho$ defined by \eqref{e:usoln} for some continuous $j^0$. Assume that 
 	\be \label{e:Zui}
 	\int_{f(\bbR)}E^{0,v}[|j^0(\sigma B_1,v)|]\Pi(dv)<\infty.
 	\ee
 	Then, \eqref{e:Zsquareint} holds.
 \end{proposition}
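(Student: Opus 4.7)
The plan is to control $\mu_s := (\rho_y/\rho)(s, Y_s, V)$ by applying Itô's formula to $\log \rho(t, Y_t, V)$ along the SDE for $Y$. Since $\rho(\cdot, \cdot, v) \in C^{1,2}([0, 1) \times \bbR)$ (as noted right after \eqref{e:usoln}) and satisfies the heat equation $\rho_t + \frac{\sigma^2}{2}\rho_{yy} = 0$, substituting the dynamics of $Y$ into Itô makes the $\rho_t$ term cancel the second-order Itô correction, while the $\mu$-drift of $Y$ contributes a $\mu^2 \, dt$ piece. Using the model's natural normalization $dY = \sigma \, dB + \sigma^2 \mu \, dt$ (so that $d\langle Y \rangle = \sigma^2 \, dt$ and the insider's optimal rate is $\alpha^* = \sigma^2 \mu$ from \eqref{e:alpha*}), this yields the identity
$$\log \rho(t, Y_t, V) = \log \rho(0, 0, V) + \sigma \int_0^t \mu_s \, dB_s + \frac{\sigma^2}{2}\int_0^t \mu_s^2 \, ds,$$
in which the deterministic drift is exactly the quantity whose $L^1(\bbQ)$-norm we wish to bound.

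Next I would introduce the stopping times $\tau_n := \inf\{t \in [0, 1] : \int_0^t \mu_s^2 \, ds \geq n\} \wedge 1$, which make $\int_0^{\cdot \wedge \tau_n} \mu \, dB$ a square-integrable true martingale. Taking $E^{0,v}$-expectations of the stopped identity produces
$$\frac{\sigma^2}{2} \, E^{0,v}\Big[\int_0^{\tau_n} \mu_s^2 \, ds\Big] = E^{0,v}\bigl[\log \rho(\tau_n, Y_{\tau_n}, v)\bigr] - \log \rho(0, 0, v).$$
The sign of $\log \rho$ enters crucially: the function $j^0(y, v) = \int_{h^{-1}(v)}^y (h(x) - v) \, dx$ from \eqref{e:J_0} is non-negative (with the natural zero choice for the additive constant), so $\rho \leq 1$ and $\log \rho \leq 0$ pointwise, whence the right-hand side above is bounded by $-\log \rho(0, 0, v)$ uniformly in $n$. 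Jensen's inequality applied to $\log \rho(0, 0, v) = \log E^{0,v}[\exp(-j^0(\sigma B_1, v)/(c\sigma^2))]$ then yields $-\log \rho(0, 0, v) \leq \frac{1}{c\sigma^2} \, E^{0,v}[|j^0(\sigma B_1, v)|]$.

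To close, I would integrate this uniform pointwise bound against $\Pi(dv)$ and pass to the limit via monotone convergence, arriving at
$$\bbE^{\bbQ}\Big[\int_0^1 \mu_s^2 \, ds\Big] \leq \frac{2}{c\sigma^4} \int_{f(\bbR)} E^{0,v}[|j^0(\sigma B_1, v)|] \, \Pi(dv) < \infty$$
by \eqref{e:Zui}. A by-product is that $\tau_n \uparrow 1$ almost surely (otherwise $\int_0^{\tau_\infty} \mu^2 \, ds = +\infty$ on a set of positive measure would contradict the $L^1$ bound), so \eqref{e:Zsquareint} holds for every $t \in [0, 1]$. I expect the main subtlety to be justifying the inequality $\log \rho \leq 0$: it is not a consequence of \eqref{e:usoln} in isolation but of the ambient construction of $j^0$ in Section \ref{s:formal}, so the convention $j^0 \geq 0$ must be carried over explicitly. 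A related minor technical point is that the SDE as literally written in the proposition gives $d\langle Y \rangle = dt$, whereas the Itô cancellation above requires $d\langle Y \rangle = \sigma^2 dt$ to match the $\frac{\sigma^2}{2}$ coefficient in the heat equation, so the computation must be performed with the drift and diffusion coefficients consistent with \eqref{e:alpha*}.
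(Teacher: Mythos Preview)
Your approach differs from the paper's and contains a genuine gap. The paper does not apply It\^o along $Y$; it works with the BSDE \eqref{e:BSDE} along the Brownian motion $\sigma B$, where the quadratic term enters with the opposite sign: $dU_t=\sigma Z_t\,dB_t-\frac{1}{2c}Z_t^2\,dt$. Jensen's inequality applied to \eqref{e:usoln} yields the \emph{lower} bound $U_t\ge -E^{0,v}[j^0(\sigma B_1,v)\mid \sigma B_t]$, a uniformly integrable martingale. This forces $\int_0^{\cdot}Z\,dB$ to be a supermartingale, and rearranging the BSDE gives
\[
\frac{1}{2c}\,E^{0,v}\Big[\int_0^1 Z_t^2\,dt\Big]\le U_0+E^{0,v}\big[j^0(\sigma B_1,v)\big],
\]
which is controlled by \eqref{e:Zui} with no sign hypothesis on $j^0$ whatsoever.

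The flaw in your route is structural: along $Y$ the drift of $\log\rho$ is $+\frac{\sigma^2}{2}\mu^2$, so bounding $E^{0,v}[\int_0^{\tau_n}\mu^2]$ requires an \emph{upper} bound on $\log\rho(\tau_n,Y_{\tau_n},v)$. You obtain this from $j^0\ge 0$, but that is neither a hypothesis of the proposition (only continuity of $j^0$ and \eqref{e:Zui} are assumed) nor true in the paper's applications. The equilibrium function $j^*(y,v)=\Psi^*(v)+\phi^*(y)-yv$ of Section~\ref{s:eq} fails it: in the Gaussian case of Theorem~\ref{t:fpG} one has $\Psi^*(\mu)=\frac{\hat c}{2}\log\frac{c}{c+\lambda^*}<0$ and $\phi^*(0)=0$, so $j^*(0,\mu)<0$ and $\rho^*(1,0,\mu)>1$. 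In other words, the ``natural zero choice'' of the additive constant in \eqref{e:J_0} is incompatible with the normalisations (Step~(1) of the recipe in Section~\ref{s:formal}) that make $\rho(0,0,\cdot)\equiv 1$ and $\rho(1,y,\cdot)\Pi(dv)$ a probability measure. There is no evident way to extract the needed upper bound on $\log\rho$ from \eqref{e:Zui} alone; the paper's device---working along $B$ so that the sign of the quadratic drift flips and only the Jensen lower bound is required---is exactly what makes the argument close.
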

\begin{proof}
To ease notation take $\sigma=1$.	As observed above, it suffices to show that 
	\[
	\bbE^{\bbQ}\Big[\int_0^1Z_t^2 dt\Big]<\infty,
	\]
	where $(U,Z)$ is as in \eqref{e:BSDE}. It follows from the Jensen's inequality and \eqref{e:usoln} that 
	\be \label{e:boundU}
	U_t \geq  M_t(v):= -E^{0,v}\big[j^0(B_1,v)\big|B_t\big] , \; P^{0,v}\mbox{-a.s..}.
	\ee
	Since $M(v)$ is a martingale on the time interval $[0,1]$, we have that $U$ is  bounded from below by a  uniformly integrable  process. Thus, by Fatou's lemma $\int_0^{\cdot}Z_sdB_s$ in \eqref{e:BSDE} is a supermartingale.  Therefore,
	\[
	\frac{1}{2c}E^{0,v}\Big[\int_0^1Z_t^2 dt\Big]\leq E^{0,v}[U_0- U_1]\leq U_0+  E^{0,v}[j^0(B_1,v)]<\infty.
	\]
	This yields the claim by the independence of $B$ and $V$ under $\bbQ$, and the hypothesis \eqref{e:Zui}.
\end{proof}
The above considerations give the recipe for constructing an equilibrium:
\begin{enumerate}
	\item Find  a continuous function $j^0: \bbR\times f(\bbR)\to \bbR$ such that i)  $\rho(1,y,v)\Pi(dv)$ is a probability measure for each $y$, where $\rho$ is given by \eqref{e:usoln}, ii) $\kappa \rho(0,0,\cdot)\equiv 1$ for some constant $\kappa>0$, and iii) it is differentiable in its first parameter with  $j^0_y(y,v)= h(y)-v$. Note that the second condition entails
	\be \label{e:initialden}
	\kappa \int_{\bbR} \frac{1}{\sqrt{2\pi\sigma^2}}\exp\Big(-\frac{y^2}{2\sigma^2}\Big)\exp\Big(-\frac{j^0(y,v)}{\sigma^2 c}\Big)dy=1, \qquad \forall v\in f(\bbR).
	\ee
	\item Set 
	\[
	H_t+\half H_{yy}=0, \qquad H(1,y)=h(y).
	\]
	\item Show that $(H,\theta^*)$  with $d\theta^*_t= \frac{\rho_y(t,Y_t,V)}{\rho(t,Y_t,V)}dt$ is equilibrium provided they are admissible by using the candidate value function $J=J^0-u$  that  satisfies \eqref{e:HJB}.
\end{enumerate}
\begin{remark}In view of the rationality of the  pricing rule, i.e. \eqref{mm:d:prule}, in equilibrium  the above recipe appears to assume that  the function $h$ also satisfies 
\[
h(y)= \kappa\int_{f(\bbR)}v\rho(1,y,v)\Pi(dv)= \kappa\int_{f(\bbR)}v\exp\Big(-\frac{j^0(y,v)}{c\sigma^2}\Big)\Pi(dv).
\]
However, this already follows from the properties of $j^0$ in Step (1) alone. Indeed,  since $\kappa \rho(1,y,\cdot)$ is a proper density for each $y$, 
\[
0=\frac{d}{dy}1= \frac{d}{dy}\kappa \int_{f(\bbR)}\exp\Big(-\frac{j^0(y,v)}{\sigma^2c}\Big)\Pi(dv)=\frac{\kappa}{\sigma^2c} \int_{f(\bbR)}(v-h(y))\exp\Big(-\frac{j^0(y,v)}{\sigma^2c}\Big)\Pi(dv),
\]
provided one can  differentiate under the integral sign.  Thus,
\[
h(y)= h(y)\kappa \int_{f(\bbR)}\exp\Big(-\frac{j^0(y,v)}{c\sigma^2}\Big)\Pi(dv)= \kappa \int_{f(\bbR)}v\exp\Big(-\frac{j^0(y,v)}{c\sigma^2}\Big)\Pi(dv)
\]
as required.
\end{remark}
\section{A fixed point algorithm}\label{s:fp}
The recipe  from the previous section shows that the equilibrium mainly boils down to finding a function $j^0$ with certain properties. The condition on its derivative implies 
\be \label{e:j0decomp}
j^0(y,v)= \Psi(v) +\phi(y)-yv,
\ee
where $\psi$ and $\phi$ are continuous functions on $f(\bbR)$ and $\bbR$, respectively. This section will describe an operator on the space of continuous functions, whose fixed point will determine the functions $\psi$ and $\phi$ that will appear in equilibrium. Although the main theorem of this section will be proved under a stronger assumption, the condition below is sufficient for a number of important estimates. 
\begin{assumption} \label{a:mgf}
All exponential moments of $V$ exist. That is, 
\be \label{e:expmV}
M_V(r):=\int_{f(\bbR)}e^{rv}\Pi(dv)<\infty, \qquad \forall r\in \bbR.
\ee
\end{assumption}
The following notation will also be used to ease the exposition:

\noindent {\bf Notation: } $\ch:=c\sigma^2$.
 
Since $\exp(-\frac{1}{\ch}j^0(y,\cdot))\kappa$ will be a probability density function itself for all $y$ for some constant $\kappa$ independent of $y$, one can without loss of generality take $\kappa=1$ by incorporating it into $\Psi$ or $\phi$. This yields the first relationship:
\[
\int_{f(\bbR)}\exp\Big(\frac{yv-\Psi(v)}{\ch}\Big)\Pi(dv)=\exp\Big(\frac{\phi(y)}{\ch}\Big), \qquad y\in \bbR.
\]
The second relationship follows from the initial condition \eqref{e:initialden}:
\[
\int_{\bbR}\exp\Big(\frac{yv-\phi(y)}{\ch}\Big) \frac{1}{\sqrt{2\pi\sigma^2}}\exp\Big(-\frac{y^2}{2\sigma^2}\Big)dy=\exp\Big(\frac{\Psi(v)}{\ch}\Big), \qquad v\in f(\bbR).
\]
Thus one obtains the following two operators on the space of measurable functions:
\be\label{e:identities}
\begin{split}
T_1 (\phi) (v):=\int_{\bbR}\exp\Big(\frac{yv-\phi(y)}{\ch}\Big) \frac{1}{\sqrt{2\pi\sigma^2}}\exp\Big(-\frac{y^2}{2\sigma^2}\Big)dy&=\exp\Big(\frac{\Psi(v)}{\ch}\Big), \qquad v\in f(\bbR);\\
T_2 (\Psi)(y):=\int_{f(\bbR)}\exp\Big(\frac{yv-\Psi(v)}{\ch}\Big)\Pi(dv)&=\exp\Big(\frac{\phi(y)}{\ch}\Big), \qquad y\in \bbR
\end{split}
\ee
Consequently, one can define an operator $T_0$ on the space of measurable functions by 
\be \label{e:defT0}
T_0\phi =\ch \log T_2(T_1(\phi)),
\ee
whose domain consists of functions so that $T_0(\phi)$ is finite for all $y\in \bbR$. Once this is done, one can apply a normalisation so that $\phi(0)=0$. This leads to the operator $T$ defined on the domain of $T_0$ as follows:
\be \label{e:defT}
T\phi =\ch \log \frac{T_2(T_1(\phi))}{T_2(T_1(\phi))(0)}.
\ee
By similar reasoning the domain $\cD(T_i)$ of the operator $T_i$ consist of functions $g$ such $T_ig$ takes values in $(0,\infty)$.  Consequently, 
\[
\cD(T)=\cD(T_0)=\{\phi\in  \cD(T_1): T_1(\phi)\in   \cD(T_2)\}.
\]
Due to the extra normalisation involved it is not immediately obvious that the fixed points of $T$ and $T_0$ are related. However, as the next proposition shows, there is a one-to-one correspondence. 
\begin{proposition}
	\label{p:TT0}
	There exists a fixed point for $T_0$ if and only if there exists a fixed point for $T$. In particular, any fixed point of $T$ is a fixed point of $T_0$.
\end{proposition}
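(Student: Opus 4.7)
The approach rests on the observation that $T_0$ is equivariant under additive constants: inspecting \eqref{e:identities} gives $T_1(\phi + a) = e^{-a/\ch}\,T_1(\phi)$ for any constant $a$, hence $T_2(T_1(\phi + a)) = e^{a/\ch}\,T_2(T_1(\phi))$, and therefore $T_0(\phi + a) = T_0\phi + a$. Because $T\phi = T_0\phi - (T_0\phi)(0)$, this also forces $T\phi(0) = 0$ for every input, so every fixed point of $T$ automatically satisfies $\phi(0) = 0$; in particular, $T\phi = \phi$ is equivalent to the statement that $T_0\phi - \phi$ is the constant $(T_0\phi)(0)$.

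For the easy direction (a fixed point of $T_0$ yields one for $T$), starting from any $\phi_0$ with $T_0\phi_0 = \phi_0$, the plan is to pass to $\phi := \phi_0 - \phi_0(0)$; equivariance gives $T_0\phi = \phi$ together with $\phi(0) = 0$, and so $T\phi = T_0\phi - (T_0\phi)(0) = \phi$.

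The substantive half is the ``in particular'' claim: the \emph{same} $\phi$ works in the reverse direction. Given $\phi = T\phi$, set $c := (T_0\phi)(0)$, so the fixed-point identity rearranges to
\[
e^{c/\ch}\,e^{\phi(y)/\ch} \;=\; \int_{f(\bbR)} \frac{e^{yv/\ch}}{T_1(\phi)(v)}\,\Pi(dv), \qquad y\in\bbR.
\]
I would multiply this identity by $e^{-\phi(y)/\ch}\,(2\pi\sigma^2)^{-1/2}e^{-y^2/(2\sigma^2)}$ and integrate over $y\in\bbR$. The right-hand side collapses to $e^{c/\ch}$. On the left, all integrands are non-negative, so Tonelli permits swapping, and the resulting inner $y$-integral is precisely the defining expression for $T_1(\phi)(v)$; this cancels the factor $1/T_1(\phi)(v)$ and what remains is $\int_{f(\bbR)}\Pi(dv) = 1$. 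Comparing the two sides yields $e^{c/\ch}=1$, i.e.~$c=0$, so $T_0\phi = \phi$.

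\textbf{Expected obstacle.} There is essentially no analytic difficulty: the integrand positivity makes the Tonelli swap free. The substantive point is recognising that the Gaussian weight built into $T_1$ and the reciprocal factor $1/T_1(\phi)$ appearing inside $T_2$ are precisely the pairing needed for the inner integral to reproduce $T_1(\phi)(v)$. It is this built-in ``self-duality'' of the $T_1$--$T_2$ pair that forces $(T_0\phi)(0) = 0$ automatically on any fixed point of $T$, so no additional normalisation is needed to pass from $T$ back to $T_0$.
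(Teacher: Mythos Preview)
Your proposal is correct and follows essentially the same approach as the paper: both arguments exploit the equivariance $T_0(\phi+a)=T_0\phi+a$ for the easy direction, and for the hard direction both integrate the identity $T_2(T_1\phi)=k\,e^{\phi/\ch}$ (your $k=e^{c/\ch}$) against the Gaussian weight and swap the order of integration via Fubini/Tonelli so that the inner integral reproduces $T_1(\phi)(v)$ and forces $k=1$. One cosmetic slip: in your narrative after the displayed identity you have interchanged ``left'' and ``right'' (the factor $e^{c/\ch}$ sits on the left of your display, and the double integral requiring Tonelli is on the right), but the mathematics is unaffected.
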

\begin{proof}
By direct calculations,  if $T_0\phi=\phi$, $T\phi_0=\phi_0$ for $\phi_0=\phi-\phi(0)$. 

Now, suppose $T\phi=\phi$. Then, $T_2 \Psi= k \exp(\frac{\phi}{\ch})$ for some $k$, where $\Psi=\ch \log T_1\phi$. That is,
\[
\begin{split}
\int_{\bbR}\exp\Big(\frac{yv-\phi(y)-\Psi(v)}{\ch}\Big) \frac{1}{\sqrt{2\pi\sigma^2}}\exp\Big(-\frac{y^2}{2\sigma^2}\Big)dy&=1, \qquad v\in f(\bbR);\\
\int_{f(\bbR)}\exp\Big(\frac{yv-\Psi(v)-\phi(y)}{\ch}\Big)\Pi(dv)&=k, \qquad y\in \bbR.
\end{split}
\]
Integrating both sides of the first identity with respect to $\Pi$ yields
\[
k\int_{\bbR} \frac{1}{\sqrt{2\pi\sigma^2}}\exp\Big(-\frac{y^2}{2\sigma^2}\Big)dy=1,
\]
in view of the second identity in the previous display and Fubini's theorem. Thus, $k=1$ and $T_0\phi=\phi$.
\end{proof}
The next result shows that the fixed point of $T$, if it exists, should be a convex function.
\begin{proposition} \label{p:psilb}
Consider $\phi \in \cD(T_1)$ and let $\Psi=\ch\log T_1 \phi$. Then, the following statements are valid:
\begin{enumerate}
	\item If  $\bbE[|\phi(\sigma B_1)|]<\infty$,  $\Psi$ is bounded from below:
	\[
	\Psi\geq -\int_{\bbR}\phi(y) \frac{1}{\sqrt{2\pi\sigma^2}}\exp\Big(-\frac{y^2}{2\sigma^2}\Big)dy.
	\]
	\item Suppose that $\Psi$ is bounded from below and Assumption \ref{a:mgf} holds. Then $\phi \in \cD(T)$.

	Moreover, $T\phi$ is infinitely  differentiable and strictly convex. In particular,
	\begin{align} 
	\frac{d}{dy}T\phi(y)&=\frac{\int_{f(\bbR)}v\exp\Big(\frac{yv-\Psi(v)}{\ch}\Big)\Pi(dv)}{\int_{f(\bbR)}\exp\Big(\frac{yv-\Psi(v)}{\ch}\Big)\Pi(dv)}, \label{e:Tphidrv}\\
	\frac{d^2}{dy^2}T\phi(y)&=\frac{1}{\ch}\frac{\int_{f(\bbR)}v^2\exp\Big(\frac{yv-\Psi(v)}{\ch}\Big)\Pi(dv)}{\int_{f(\bbR)}\exp\Big(\frac{yv-\Psi(v)}{\ch}\Big)\Pi(dv)}-\frac{1}{\ch}\Bigg(\frac{\int_{f(\bbR)}v\exp\Big(\frac{yv-\Psi(v)}{\ch}\Big)\Pi(dv)}{\int_{f(\bbR)}\exp\Big(\frac{yv-\Psi(v)}{\ch}\Big)\Pi(dv)}\Bigg)^2. \label{e:Tphidrv2}
	\end{align}
	 \end{enumerate}
\end{proposition}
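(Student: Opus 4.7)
For part (1) the plan is to recognise the defining identity for $T_1$ as a Gaussian expectation,
\[
\exp\Big(\frac{\Psi(v)}{\ch}\Big)=\bbE\Big[\exp\Big(\frac{\sigma B_1 v-\phi(\sigma B_1)}{\ch}\Big)\Big],
\]
and apply Jensen's inequality with the convex function $\exp$. Since $\bbE[\sigma B_1]=0$, the $v$-dependent term drops out of the Jensen lower bound and what remains collapses to $\exp(-\bbE[\phi(\sigma B_1)]/\ch)$; taking $\ch\log$ of both sides delivers the claimed inequality. The hypothesis $\bbE[|\phi(\sigma B_1)|]<\infty$ is used exactly to ensure that the right-hand side is well-defined.

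For part (2), the first step is to show $\phi\in\cD(T)$, i.e., that $T_2(\exp(\Psi/\ch))(y)$ is positive and finite for every $y\in\bbR$. Positivity of the integrand is automatic; for finiteness, using $\Psi\geq -K$ yields
\[
T_2\Big(\exp\Big(\frac{\Psi}{\ch}\Big)\Big)(y)\leq e^{K/\ch}\int_{f(\bbR)}e^{yv/\ch}\Pi(dv)=e^{K/\ch}\,M_V\Big(\frac{y}{\ch}\Big),
\]
which is finite by Assumption \ref{a:mgf}. For the derivative formulas, I would differentiate under the integral sign. Fixing a compact interval $y\in[a,b]$, the $k$-th $y$-derivative of the integrand in $T_2(\exp(\Psi/\ch))(y)$ is $\ch^{-k}v^k\exp((yv-\Psi(v))/\ch)$, and is dominated by $e^{K/\ch}|v|^k\exp((|a|\vee|b|)|v|/\ch)$. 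Bounding $|v|^k\leq C_{k,\eps}e^{\eps|v|}$ reduces the dominating integral to a Laplace transform $M_V(r)$, finite for every $r\in\bbR$ under Assumption \ref{a:mgf}. This legitimates repeated differentiation; applying the chain rule to $y\mapsto\ch\log T_2(\exp(\Psi/\ch))(y)$ (the additive constant $-\ch\log T_2(\exp(\Psi/\ch))(0)$ from the normalisation in $T$ vanishes after one differentiation) then produces \eqref{e:Tphidrv} and \eqref{e:Tphidrv2} by direct computation.

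Finally, strict convexity of $T\phi$ follows from recognising \eqref{e:Tphidrv2} as $\ch^{-1}\mathrm{Var}_{\Pi_y}(V)$, where $\Pi_y(dv)\propto \exp((yv-\Psi(v))/\ch)\Pi(dv)$ is a probability measure equivalent to $\Pi$. Since $V$ is non-degenerate (otherwise the insider trading problem is trivial), this variance is strictly positive for every $y$. The main technical obstacle is the dominated convergence step justifying differentiation under the integral, because one must simultaneously accommodate the polynomial factor $v^k$, the exponential $e^{yv/\ch}$, the lower bound on $\Psi$, and uniformity in $y$ on compacta; Assumption \ref{a:mgf} is calibrated precisely to supply such bounds.
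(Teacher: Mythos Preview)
Your proposal is correct and follows essentially the same route as the paper's proof: Jensen's inequality for part (1) (the paper phrases it via the concavity of $\log$ rather than the convexity of $\exp$, which is equivalent), and for part (2) the lower bound on $\Psi$ combined with Assumption \ref{a:mgf} to bound $T_2\Psi$ by a multiple of $M_V(y/\ch)$, then dominated convergence to differentiate under the integral, and finally the identification of \eqref{e:Tphidrv2} as $\ch^{-1}$ times the variance under the tilted measure $\Pi_y$. The only cosmetic difference is that the paper invokes the standard fact $M_V^{(n)}(r)=\int v^n e^{rv}\,\Pi(dv)<\infty$ directly (citing Billingsley), whereas you spell out an explicit dominating function via $|v|^k\leq C_{k,\eps}e^{\eps|v|}$; both arguments amount to the same thing.
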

\begin{proof}
\begin{enumerate}
	\item This is a straightforward consequence of Jensen's inequality applied to the natural logarithm and definition of $T_1$ in \eqref{e:identities}.
	\item Since $\phi\in \cD(T_1)$, $\Psi$ is finite. Thus, $T_2\Psi$ never vanishes. Moreover the lower bound on $\Psi$ yields for some constant $K$ that
	\[
	T_2\Psi(y)\leq K \int_{f(\bbR)}\exp\Big(\frac{yv}{\ch}\Big)\Pi(dv)<\infty.
		\]
 Thus, $T_0\phi$ is finite everywhere. This yields the claim.

Note that since the moment generating function are finite everywhere, it is smooth and in particular (see, e.g.,  (21.24) in Section 21 and the preceding discussion in \cite{Bil} )
\[
M_V^{(n)}=\int_{f(\bbR)}v^ne^{rv}\Pi(dv)<\infty, \qquad \forall r\in \bbR \mbox{ and } n
\geq 0.
\] 
Thus,  the above  integrability, the lower bound on $\Psi$, and  the dominated convergence theorem immediately yield \eqref{e:Tphidrv}. By iterating this operation one obtains that the numerator in \eqref{e:Tphidrv} is infinitely differentiable by the smoothness of $M_V$.   

Similarly, the second derivative is given by \eqref{e:Tphidrv2}, which is  positive being a positive multiple of the variance of a random variable $\tilde{v}$ with the probability density
\[
P(\tilde{v}\in dx)=\frac{\exp\Big(\frac{yx-\Psi(x)}{\ch}\Big)\Pi(dx)}{\int_{f(\bbR)}\exp\Big(\frac{yv-\Psi(v)}{\ch}\Big)\Pi(dv)}, \qquad x\in f(\bbR).
\]
Since $\tilde{v}$ is not constant, the second derivative is strictly positive, hence the strict convexity follows.
\end{enumerate}
\end{proof}
An analogous result, whose proof is omitted, also holds for $\Psi$. To present its statement let's introduce
\be \label{e:defTT}
\tilde{T}\Psi =\ch \log \frac{T_1(T_2(\Psi))}{T_1(T_2(\Psi))(0)}.
\ee
\begin{proposition}
	Consider $\Psi \in \cD(T_2)$ and let $\tilde{\phi}=\ch\log T_2 \Psi$. Then, the following statements are valid:
	\begin{enumerate}
		\item If  $\bbE[|\Psi(V)|]<\infty$,  $\tilde{\phi}$ is bounded from below:
		\[
	\tilde{\phi}(y)\geq y\bbE[V]-\bbE[\Psi(V)].
		\]
		\item Suppose that $\tilde{\phi}$ is bounded from below. Then $\Psi \in \cD(\tilde{T})$. Moreover, $\tilde{T}\Psi$ is infinitely  differentiable and strictly convex. In particular,
		\begin{align} 
			\frac{d}{dv}\tilde{T}\Psi(v)&=\frac{\int_{\bbR}y\exp\Big(\frac{yv-\tilde{\phi}(y)}{\ch}\Big)p(\sigma,y)dy}{\int_{\bbR}\exp\Big(\frac{yv-\tilde{\phi}(y)}{\ch}\Big)p(\sigma,y)dy}, \label{e:Tpsidrv}\\
			\frac{d^2}{dv^2}\tilde{T}\Psi(v)&=\frac{\int_{\bbR}y^2\exp\Big(\frac{yv-\tilde{\phi}(y)}{\ch}\Big)p(\sigma,y)dy}{\int_{\bbR}\exp\Big(\frac{yv-\tilde{\phi}(y)}{\ch}\Big)p(\sigma,y)dy}-\Bigg(\frac{\int_{\bbR}y\exp\Big(\frac{yv-\tilde{\phi}(y)}{\ch}\Big)p(\sigma,y)dy}{\int_{\bbR}\exp\Big(\frac{yv-\tilde{\phi}(y)}{\ch}\Big)p(\sigma,y)dy}\Bigg)^2, \label{e:Tpsidrv2}
		\end{align}
		where $p(\sigma,\cdot)$ is the density of a normal distribution with variance $\sigma^2$.
	\end{enumerate}
\end{proposition}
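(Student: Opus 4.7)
The plan is to mirror the proof of Proposition \ref{p:psilb} essentially verbatim, with the roles of $(y,\phi,p(\sigma,\cdot))$ exchanged for $(v,\Psi,\Pi)$. The only structural asymmetry is which measure provides the integration kernel in the outer operator: in Proposition \ref{p:psilb} it is $\Pi$, which is why Assumption \ref{a:mgf} enters the argument, whereas here the outer kernel is the Gaussian density $p(\sigma,\cdot)$, which already possesses all exponential moments. Consequently no moment condition on $V$ is required for part (2).

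For part (1), I would apply Jensen's inequality to the concave function $\log$ against the probability measure $\Pi$; provided $\bbE[|\Psi(V)|]<\infty$,
\[
\tilde{\phi}(y) = \ch \log\int_{f(\bbR)} \exp\Big(\frac{yv-\Psi(v)}{\ch}\Big)\Pi(dv) \geq \int_{f(\bbR)}(yv-\Psi(v))\,\Pi(dv) = y\bbE[V] - \bbE[\Psi(V)],
\]
which is the stated bound.

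For part (2), suppose $\tilde{\phi}\geq c_0$ pointwise for some constant $c_0$. Then, for every $v$,
\[
T_1(\tilde{\phi})(v) \leq e^{-c_0/\ch} \int_{\bbR} \exp(yv/\ch)\, p(\sigma, y)\,dy = e^{-c_0/\ch}\exp\Big(\frac{\sigma^2 v^2}{2\ch^2}\Big) < \infty,
\]
so $\Psi\in\cD(\tilde T)$. To obtain \eqref{e:Tpsidrv}--\eqref{e:Tpsidrv2}, I would differentiate under the integral sign in $v\mapsto T_1(\tilde\phi)(v)$; the dominating function on a compact neighbourhood $\{|v|\leq R\}$ is
\[
e^{-c_0/\ch}\,|y|^n\exp(R|y|/\ch)\,p(\sigma,y),
\]
which is Lebesgue integrable for every $n\geq 0$ by the Gaussian decay of $p(\sigma,\cdot)$. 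Iterating gives infinite differentiability, and the quotient rule applied to $\tilde{T}\Psi(v)=\ch\log T_1(\tilde\phi)(v)-\ch\log T_1(\tilde\phi)(0)$ produces the stated expressions. Strict convexity then follows because \eqref{e:Tpsidrv2} is precisely the variance of a random variable $\tilde y$ with density proportional to $p(\sigma,y)\exp((yv-\tilde\phi(y))/\ch)$ with respect to Lebesgue measure on $\bbR$; since this density is everywhere positive, $\tilde y$ is non-degenerate and so has strictly positive variance.

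The main obstacle---really the only substantive step---is justifying differentiation under the integral sign to all orders; this is entirely routine thanks to the Gaussian tail in $p(\sigma,\cdot)$, which dominates any polynomial-times-exponential growth in $y$ and, in particular, removes the need for an analogue of Assumption \ref{a:mgf} in this direction.
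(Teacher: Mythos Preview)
Your proof is correct and follows exactly the approach the paper indicates: the paper explicitly omits the proof, stating only that it is analogous to that of Proposition~\ref{p:psilb}, and your argument carries out this analogy faithfully, correctly noting that the Gaussian kernel $p(\sigma,\cdot)$ supplies all exponential moments so that no analogue of Assumption~\ref{a:mgf} is needed in this direction. One minor remark: the stated formula \eqref{e:Tpsidrv2} is missing a factor of $1/\ch$ (compare \eqref{e:Tphidrv2}), so the variance interpretation you give is off by that constant, but this is a typo in the paper's statement rather than a defect in your argument.
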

The above results indicate that the fixed point of $T$ is likely  to be a smooth convex function. The next lemma improves upon the previous proposition by providing an upper bound on the function $\Psi$. 
\begin{lemma} \label{l:psiub}
	Consider a convex and differentiable $\phi \in \cD(T_1)$ with $\phi(0)=0$,  and let $\Psi=\ch\log T_1 \phi$.  Then
	\be\label{e:psiUB}
	\Psi(v)\leq \frac{\big(v-\phi'(0)\big)^2}{2 c^2\sigma^2}, \qquad v\in f(\bbR).
	\ee
%	If  $\Psi$ is bounded from below by a constant and Assumption \ref{a:mgf} holds, $\Psi \in \cD(T_2)$. Moreover,  $\bbE[|\phi_1(\sigma B_1)|]<\infty$, where $\phi_1:=\ch\log T_2\Psi$.
\end{lemma}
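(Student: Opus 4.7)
The plan is to exploit the fact that a convex function lies above every one of its tangent lines. Since $\phi$ is convex and differentiable with $\phi(0)=0$, the supporting hyperplane inequality at the origin gives
\[
\phi(y)\geq \phi(0)+\phi'(0)y=\phi'(0)y, \qquad y\in\bbR.
\]
Substituting this lower bound into the defining integral of $T_1\phi$ in \eqref{e:identities} should immediately transform the problem into the evaluation of a Gaussian integral.

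More precisely, I would write
\[
\exp\Big(\frac{\Psi(v)}{\ch}\Big)=\int_{\bbR}\exp\Big(\frac{yv-\phi(y)}{\ch}\Big)\frac{1}{\sqrt{2\pi\sigma^2}}\exp\Big(-\frac{y^2}{2\sigma^2}\Big)dy\leq \int_{\bbR}\exp\Big(\frac{y(v-\phi'(0))}{\ch}\Big)\frac{1}{\sqrt{2\pi\sigma^2}}\exp\Big(-\frac{y^2}{2\sigma^2}\Big)dy,
\]
using the tangent inequality inside the exponent. The right-hand side is the moment generating function of a centred Gaussian with variance $\sigma^2$ evaluated at $(v-\phi'(0))/\ch$, and therefore equals
\[
\exp\Big(\frac{(v-\phi'(0))^2\sigma^2}{2\ch^2}\Big)=\exp\Big(\frac{(v-\phi'(0))^2}{2c^2\sigma^2}\Big),
\]
after substituting $\ch=c\sigma^2$. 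Taking the logarithm on both sides and multiplying by $\ch$ then delivers the announced bound.

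The argument has essentially no obstacle; the only ingredients are the global tangent-line lower bound for convex functions (which requires only differentiability at $0$ and $\phi(0)=0$, both assumed) and the explicit Gaussian mgf computation. The assumption $\phi\in\cD(T_1)$ guarantees that $\Psi(v)$ is finite so that taking logarithms is unproblematic, and no appeal to Assumption \ref{a:mgf} is necessary for this upper bound since the estimate is performed entirely on the left integral in \eqref{e:identities}.
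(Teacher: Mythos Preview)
Your argument is correct and coincides with the paper's own proof: both use the tangent-line inequality $\phi(y)\geq y\phi'(0)$ (from convexity and $\phi(0)=0$) inside $T_1\phi$ and then evaluate the resulting Gaussian moment generating function. One small caveat: carrying out your last step literally gives $\Psi(v)\leq \ch\cdot\frac{(v-\phi'(0))^2}{2c^2\sigma^2}=\frac{(v-\phi'(0))^2}{2c}$, which differs from the displayed bound by a factor of $\ch$; this discrepancy is already present in the paper (whose proof stops at $T_1\phi(v)\leq\exp\big(\frac{(v-\phi'(0))^2\sigma^2}{2\ch^2}\big)$), so the mismatch is in the statement rather than in your reasoning.
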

\begin{proof}
	Observe that $\phi(y) \geq y\phi'(0)$ for all $y\in \bbR$. Thus,
	\[
	T_1\phi(v) \leq \int_{\bbR} \exp\Big(\frac{y(v-\phi'(0))}{\ch}\Big) \frac{1}{\sqrt{2\pi\sigma^2}}\exp\Big(-\frac{y^2}{2\sigma^2}\Big)dy=\exp\Big(\frac{\big(v-\phi'(0)\big)^2\sigma^2}{2\ch^2}\Big),
	\]
	in view of the moment generating function of the normal distribution. This proves the upper bound.
	
%	Next observe that additional hypothesis on $\Psi$ and Assumption \ref{a:mgf} imply that  $T_2(\Psi)$  is finite and never vanishes by the existence of all exponential moments of $V$. 
%	
%	To show the integrability of $\phi_1(\sigma B_1)$, first observe in view of the Jensen's inequality that
%	\[
%	\phi_1(y) \geq y\bbE[V] -\bbE[\Psi(V)]\geq  y\bbE[V]-\bbE\Big[\frac{(V-\phi'(0))^2}{2 c}\Big].
%	\]
%	Moreover, since $\Psi$ is bounded from below, say by $-K$, one also has
%	\[
%	\phi_1(y)\leq K+ \ch \log M_V\big(\frac{y}{\ch}\big).
%	\]
%	Next,
%	\[
%	\bbE\Big[\log M_V\Big(\frac{\sigma B_1}{\ch}\Big)\Big ]\leq \log \bbE\Big[M_V\Big(\frac{\sigma B_1}{\ch}\Big)\Big ]
%	\]
\end{proof}

These estimates leads to the following theorem that establishes the existence of a fixed point when $V$ is bounded.
\begin{theorem} \label{t:fpbd}
   Suppose that $\kappa:=\sup\{|v|:v\in f(\bbR)\}<\infty$. Then, there exists a strictly convex $\phi$  such that $T\phi =\phi=T_0\phi$. 
\end{theorem}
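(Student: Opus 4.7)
The plan is to apply the Schauder--Tychonoff fixed point theorem to $T$ acting on a convex, compact invariant subset $\cC$ of $C(\bbR)$ endowed with the topology of uniform convergence on compact sets. A natural candidate, motivated by Proposition \ref{p:psilb} and the boundedness of $V$, is
\[
\cC := \Bigl\{ \phi \in C(\bbR) : \phi \text{ convex},\ \phi(0)=0,\ |\phi'(y)|\leq \kappa \text{ and } \phi''(y)\leq \kappa^2/\ch \text{ for a.e.\ } y\in\bbR\Bigr\},
\]
which is manifestly convex. Since boundedness of $V$ trivially implies Assumption \ref{a:mgf}, Proposition \ref{p:psilb} is available without qualification.

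The key structural step is to show $T(\cC)\subset\cC$. For $\phi\in\cC$, the bound $|\phi(y)|\leq \kappa|y|$ gives $\bbE[|\phi(\sigma B_1)|]<\infty$, so Proposition \ref{p:psilb}(1) shows that $\Psi=\ch\log T_1\phi$ is bounded below, and then Proposition \ref{p:psilb}(2) places $\phi$ in $\cD(T)$ and validates the formulae \eqref{e:Tphidrv}--\eqref{e:Tphidrv2}. The normalisation in \eqref{e:defT} forces $T\phi(0)=0$. Because $f(\bbR)\subset[-\kappa,\kappa]$, \eqref{e:Tphidrv} expresses $(T\phi)'(y)$ as the mean of a probability distribution supported in $[-\kappa,\kappa]$, yielding $|(T\phi)'|\leq \kappa$, while \eqref{e:Tphidrv2} gives $(T\phi)''(y)$ as $1/\ch$ times the variance of the same distribution, hence at most $\kappa^2/\ch$. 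Strict convexity from Proposition \ref{p:psilb}(2) then ensures $T\phi\in\cC$.

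The compactness of $\cC$ in the compact-open topology follows from Arzel\`a--Ascoli: all elements are $\kappa$-Lipschitz and satisfy $|\phi(y)|\leq \kappa|y|$, hence uniformly bounded on every compact interval, while the defining properties (convexity, vanishing at $0$, the Lipschitz constant, and concavity of $y\mapsto \phi(y)-\frac{\kappa^2}{2\ch}y^2$) all pass to uniform-on-compacts limits. For continuity of $T$, suppose $\phi_n\to\phi$ uniformly on compacts in $\cC$. For $v\in[-\kappa,\kappa]$ the integrand in $T_1\phi_n(v)$ is uniformly dominated by $\exp(2\kappa|y|/\ch)$ times the Gaussian density, so dominated convergence gives pointwise convergence $T_1\phi_n\to T_1\phi$; equicontinuity in $v$ (from the uniform bound on $(T_1\phi_n)'$, itself a consequence of the Lipschitz bound on $\phi_n$) upgrades this to uniform convergence on the compact set $f(\bbR)$. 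Taking logarithms yields $\Psi_n\to\Psi$ uniformly on $f(\bbR)$, a second application of dominated convergence transfers the convergence to $T_2\Psi_n\to T_2\Psi$ uniformly on compact $y$-sets, and the normalisation by $T_2\Psi_n(0)$ preserves this convergence.

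Schauder--Tychonoff then delivers a fixed point $\phi\in\cC$ of $T$, automatically strictly convex by Proposition \ref{p:psilb}(2), and Proposition \ref{p:TT0} identifies it as a fixed point of $T_0$ as well. The main obstacle is securing the invariance $T(\cC)\subset\cC$: the Lipschitz and second-derivative bounds on $T\phi$ are tight precisely because $V$ is bounded, and it is this structural control that makes the argument go through without finer information about $\Pi$. Without such boundedness the second derivative bound on $T\phi$ would break down and an invariant set tailored to the tail behaviour of $\Pi$ would be required, which is why Theorem \ref{t:fpG} for the Gaussian case is handled by explicit computation rather than as a specialisation of the present theorem.
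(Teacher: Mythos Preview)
Your proposal is correct and rests on the same core estimates as the paper: the bound $|(T\phi)'|\leq\kappa$ coming from the bounded support of $V$, Arzel\`a--Ascoli for compactness, and dominated convergence for the continuity of $T$. The paper, however, packages these ingredients differently: it iterates $\phi_{n+1}=T\phi_n$ from $\phi_0\equiv 0$, shows by induction that each $\phi_n$ is $\kappa$-Lipschitz with $\phi_n(0)=0$, extracts a subsequence converging uniformly on compacts, and then argues via the same dominated-convergence steps that the limit is a fixed point. Your Schauder--Tychonoff route is cleaner in one respect: the iterative argument must explain why the limit of $\phi_{n_k}$ coincides with that of $\phi_{n_k+1}=T\phi_{n_k}$ along the extracted subsequence, a point the paper passes over by ``abuse of notation''; Schauder sidesteps this by folding compactness and continuity into a single invocation. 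Conversely, the paper's approach is slightly more economical in that it never introduces the second-derivative bound you include in $\cC$ (the $\kappa$-Lipschitz property alone gives equicontinuity) and does not need to verify convexity of an invariant set.
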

\begin{proof}
Let $\phi_0=0$ and define $\phi_{n+1}=T\phi_n$ until  $\phi_n$ no longer belongs to $\cD(T)$.
\begin{enumerate}
	\item[Step 1] {\em $\phi_n \in \cD(T)$, $\phi_n(0)=0$, and $|\phi'_n|\leq \kappa$ for all $n\geq 0$}: Denoting $T_1 \phi_0$ by $\Psi_0$, Proposition \ref{p:psilb} first shows that $\Psi_0\geq -\bbE[\phi_0(\sigma B_1)]=0$. Since Assumption \ref{a:mgf} is clearly satisfied, this in turn implies $\phi_0\in \cD(T)$.  Clearly, $|\phi'_0|\leq \kappa$. 
	
	Next, suppose $\phi_n \in \cD(T)$, $\phi_n(0)=0$ and $|\phi'_n|\leq \kappa$ for some $n$. Then, $\phi_{n+1}(0)=0$ by the construction of $T$ and  $|\phi_{n+1}'|\leq \kappa$ by \eqref{e:Tphidrv}. Thus, $|\phi_{n+1}(y)|\leq \kappa|y|$, implying $\phi_{n+1} \in \cD(T_1)$ and $E[|\phi_{n+1}(\sigma B_1)|]<\infty$. This yields that  $\phi_{n+1} \in \cD(T)$ by another application of Proposition \ref{p:psilb}. Therefore, the claim follows from induction.
	\item[Step 2] {\em Existence of a convergent subsequence:} Since the collection $(\phi_n)_{n\geq 1}$ is equicontinuous and  bounded uniformly by the function $y \mapsto \kappa |y|$, one can extract a subsequence that converges (uniformly in compacts) to some $\phi$.
	\item[Step 3:] {\em Fixed point property:} With an abuse of notation suppose $(\phi_n)$ converges and let $\Psi_n:= T_1 \phi_n$. Note that the dominated convergence theorem implies that $\Psi:=\lim_{n\rar \infty}T_1 \phi_n=T_1 \phi$. Next, Proposition \ref{p:psilb} shows that
	\[
	\Psi_n \geq -\bbE[\phi_n(\sigma B_1)]\geq -\kappa \sigma  \bbE[|B_1|].
	\]
	Moreover, Lemma \ref{l:psiub} yields that $\Psi_n \leq \frac{\kappa^2}{c^2\sigma^2}$. Thus, $(\Psi_n)$ is uniformly bounded. Another application of the dominated convergence theorem now yields
	\[
	\lim_{n\rar \infty}T_2\Psi_n/T_2\Psi_n(0)= T_2\Psi/T_2\Psi(0)=\exp\Big(\frac{T\phi}{\ch}\Big)
	\]
On the other hand, 
\[
T_2\Psi_n/T_2\Psi_n(0)=\exp\Big(\frac{\phi_{n+1}}{\ch}\Big).
\]
Combining the last two displays and taking limits establish that $\phi= T\phi$. The strict convexity follows from Proposition \ref{p:psilb}. The remaining assertion is a consequence of Proposition \ref{p:TT0}.
\end{enumerate} 
\end{proof}
\begin{example}
	Consider the case of a Bernoulli $V$, where $\Pi(\{1\})=1-\Pi(\{0\})=p$. Existence of a fixed point for $T$  is  ensured by Theorem \ref{t:fpbd}.  As the support contains only two points, an easier representation of $\phi$ can be found as follows.
	
	In view of the normalisation that $\Phi(0)=0$, one can conjecture that
	\[
	\exp\Big(\frac{\phi}{\ch}\Big)=\exp\Big(\frac{y}{\ch}\Big)ap +1-ap,\mbox{ and } \Psi(1)=-\ch \log a, \Psi(0)=-\ch\log \frac{1-ap}{1-p}.
	\]
	Thus, it remains to pinpoint the value of $a$, which is  fixed by the following non-linear equation.
	\[
		\frac{1}{a}=\int_{\bbR}\frac{e^{\frac{y}{\ch}}}{ap(e^{\frac{y}{\ch}}-1)+1}\frac{1}{\sqrt{2\pi \sigma^2}}\exp\Big(-\frac{y^2}{2 \sigma^2}\Big)dy.
	\]
	However, this is equivalent to 
	\[
1=\int_{\bbR}\frac{e^{\frac{y}{\ch}}}{p(e^{\frac{y}{\ch}}-1)+a^{-1}}\frac{1}{\sqrt{2\pi \sigma^2}}\exp\Big(-\frac{y^2}{2 \sigma^2}\Big)dy.
	\]
	Note that $a \in (0,p^{-1})$.  Thus, the right hand side is increasing from $0$ to $1/p$ on $(0, p^{-1})$, and  there exists a unique solution belonging to $(0,p^{-1})$.
\end{example}
\section{Equilibrium}\label{s:eq}
In the first part of this section, existence of equilibrium for the economy described in Section \ref{s:setup}  will be established under  the following assumption.
\begin{assumption}
	\label{a:fpexists}  There exists a pair of continuous functions $(\phi^*,\Psi^*)$ such that $\phi^*$ is a fixed point of the operator $T$ in \eqref{e:defT} and $\Psi^*=c\sigma^2 \log T_1\phi^*$, where $T_1$ is as in \eqref{e:identities}. Moreover, $\phi^*$ is twice continuously differentiable satisfying \eqref{e:Tphidrv} and \eqref{e:Tphidrv2}, and \eqref{e:Zui} is satisfied when $j^0$ is replaced by 
	\be \label{e:j*}
	j^*(y,v):=\Psi^*(v) + \phi^*(y)-yv.
	\ee
\end{assumption}
Recall from Proposition \ref{p:psilb} that $\phi^*$ is strictly convex, which will be used frequently in proofs.  

In view of the recipe at the end of Section \ref{s:formal} the candidate pricing rule for equilibrium is the following:
\be \label{e:defH*}
H^*(t,y)=\int_{f(\bbR)}\rho^*(t,y,z)z\Pi(dz),
\ee
where
\be\label{e:rho*}
\rho^*(t,y,v):=E^{0,v}\Big[\exp\Big(-\frac{j^*(\sigma B_1,v)}{\ch}\Big)\Big|\sigma B_t=y\Big].
\ee 
 Observe that the expectation above is non-zero and    finite due to the following:
\begin{lemma}\label{l:rho} The following bounds apply:
	\begin{align*}
		E^{0,v}\Big[\exp\Big(-\frac{j^*(\sigma B_1,v)}{\ch}\Big)\Big|\sigma B_t=x\Big]&\leq \exp\Big(-\frac{\Psi^*(v)}{\ch}+ \frac{(1-t)(v-h^*(0))^2}{2c^2\sigma^2}+(v-h^*(0)x\Big),\\
		E^{0,v}\Big[\exp\Big(-\frac{j^*(\sigma B_1,v)}{\ch}\Big)\Big|\sigma B_t=x\Big]&\geq \exp\Big(\frac{vx -\Psi^*(v)-E^{0,v}[\phi^*(\sigma B_1)|\sigma B_t=x]}{\ch} \Big).
	\end{align*}
\end{lemma}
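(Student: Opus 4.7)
The plan is to treat the two bounds separately, exploiting the product structure
\[
\exp\!\Big(-\frac{j^*(\sigma B_1,v)}{\ch}\Big) = \exp\!\Big(-\frac{\Psi^*(v)}{\ch}\Big)\exp\!\Big(-\frac{\phi^*(\sigma B_1)}{\ch}\Big)\exp\!\Big(\frac{\sigma B_1\, v}{\ch}\Big),
\]
in which $\Psi^*(v)$ factors out of the conditional expectation, and the other two pieces depend only on $B_1$. Throughout I will use that under $P^{0,v}$, conditional on $\sigma B_t=x$, the random variable $\sigma B_1$ is Gaussian with mean $x$ and variance $\sigma^2(1-t)$.

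For the \emph{upper bound}, the key observation is that $(\phi^*)'(0)=h^*(0)$. Indeed, since $\phi^*$ is a fixed point of $T$, Proposition \ref{p:TT0} gives $T_2(T_1\phi^*)=\exp(\phi^*/\ch)$, so $\rho^*(1,y,v)=\exp(-j^*(y,v)/\ch)$ together with the definition of $H^*$ yields
\[
h^*(y) \;=\; H^*(1,y) \;=\; \frac{\int_{f(\bbR)} v\exp((yv-\Psi^*(v))/\ch)\,\Pi(dv)}{\int_{f(\bbR)} \exp((yv-\Psi^*(v))/\ch)\,\Pi(dv)} \;=\; (T\phi^*)'(y)\;=\;(\phi^*)'(y),
\]
by \eqref{e:Tphidrv}. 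Since $\phi^*$ is convex with $\phi^*(0)=0$, the tangent-line inequality gives $\phi^*(y)\ge h^*(0)\,y$. Substituting this into the product decomposition above,
\[
\exp\!\Big(-\frac{j^*(\sigma B_1,v)}{\ch}\Big)\;\le\;\exp\!\Big(-\frac{\Psi^*(v)}{\ch}\Big)\exp\!\Big(\frac{\sigma B_1\,(v-h^*(0))}{\ch}\Big),
\]
and then the conditional Gaussian moment generating function computation
\[
E^{0,v}\!\Big[\exp\!\Big(\frac{\sigma B_1(v-h^*(0))}{\ch}\Big)\,\Big|\,\sigma B_t=x\Big] \;=\; \exp\!\Big(\frac{x(v-h^*(0))}{\ch}+\frac{(1-t)(v-h^*(0))^2}{2c^2\sigma^2}\Big),
\]
using $\sigma^2/\ch^2=1/(c^2\sigma^2)$, delivers the claimed inequality.

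For the \emph{lower bound}, I would apply Jensen's inequality to the convex function $\exp$:
\[
E^{0,v}\!\Big[\exp\!\Big(-\frac{j^*(\sigma B_1,v)}{\ch}\Big)\,\Big|\,\sigma B_t=x\Big]\;\ge\;\exp\!\Big(-\frac{1}{\ch}\,E^{0,v}[\,j^*(\sigma B_1,v)\mid \sigma B_t=x]\Big).
\]
Linearity of conditional expectation together with the martingale property of $B$, which gives $E^{0,v}[\sigma B_1\mid \sigma B_t=x]=x$, yields
\[
E^{0,v}[j^*(\sigma B_1,v)\mid \sigma B_t=x]\;=\;\Psi^*(v)+E^{0,v}[\phi^*(\sigma B_1)\mid \sigma B_t=x]-xv,
\]
and rearranging the signs in the exponent gives exactly the stated lower bound.

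No real obstacle is expected: the only subtlety is identifying $h^*(0)$ with $(\phi^*)'(0)$ so that the convex tangent-line inequality can be invoked in a form that matches the stated right-hand side; both bounds are then one-line consequences of either convexity plus a Gaussian MGF or Jensen plus the Brownian martingale property.
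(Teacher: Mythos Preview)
Your proposal is correct and follows essentially the same route as the paper: for the upper bound you use convexity of $\phi^*$ at $0$ (with $(\phi^*)'(0)=h^*(0)$) to get $\phi^*(y)\ge h^*(0)y$ and then the Gaussian moment generating function, and for the lower bound you apply Jensen's inequality to the exponential together with $E^{0,v}[\sigma B_1\mid \sigma B_t=x]=x$. The only cosmetic difference is that you spell out why $h^*(0)=(\phi^*)'(0)$, which the paper records just before the lemma via \eqref{e:pdeH*}; note also that the stated upper bound in the lemma is missing a factor $1/\ch$ on the $(v-h^*(0))x$ term (and a closing parenthesis), which your computation correctly supplies.
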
	
\begin{proof}
	Since $\phi^*$ is convex by Proposition \ref{p:psilb} and vanishes at $0$,  $\phi^*(y)\geq y h^*(0)$. Thus,
	\begin{align*}
		E^{0,v}\Big[\exp\Big(-\frac{j^*(\sigma B_1,v)}{\ch}\Big)\Big|\sigma B_t=x\Big]&=\exp\Big(-\frac{\Psi^*(v)}{\ch}\Big)	E^{0,v}\Big[\exp\Big(\frac{v\sigma B_1-\phi^*(\sigma B_1)}{\ch}\Big)\Big|\sigma B_t=x\Big]\\
		&\leq \exp\Big(-\frac{\Psi^*(v)}{\ch}\Big)	E^{0,v}\Big[\exp\Big(\frac{\sigma B_1(v-h^*(0))}{\ch}\Big)\Big|\sigma B_t=x\Big],
	\end{align*}
	which yields the first claim in view of the moment generation function of normal distribution. The second claim is a direct consequence of Jensen's inequality.
\end{proof}
Thus, \eqref{e:Tphidrv} implies in particular that
\be \label{e:pdeH*}
H^*_t +\frac{\sigma^2}{2}H^*_{yy}=0, \qquad H^*(1,\cdot)=h^*=\frac{d\phi^*}{dy}.
\ee
Moreover, the discussion in Section \ref{s:formal} predicts  that the value function of the informed trader is given by $J=J^0+u$, 
\be \label{e:J0}
J^0_t + \frac{\sigma^2}{2}J^0_{yy}=0, \quad J^0(1,y)=j^*(y,v),
\ee
and $u$ is given by \eqref{e:usoln} with $j^0$ replaced by $j^*$.
These considerations yield the following  in view of the PDE \eqref{e:pdeu} that $u$ satisfies.
\begin{proposition}
	Under Assumption \ref{a:fpexists} there exists a classical solution to 
	\be \label{e:J}
	J_t + \frac{\sigma^2}{2}J_{yy} + \frac{(J_y + v-H^*)^2}{2c}=0, \qquad J(1,\cdot)=0,
	\ee
	where $H^*$ is given by \eqref{e:defH*}. In particular, $J=J^0+ \ch \log \rho^*$, where $J^0$ is given by \eqref{e:J0}\footnote{When there are no penalties, the decomposition of $J$ coincides with the one given in Theorem 3.2 in \cite{BEopttr}.}.
\end{proposition}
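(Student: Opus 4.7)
The plan is to follow the decomposition from Section \ref{s:formal}: define
\[
u(t,y) := \ch \log \rho^*(t,y,v), \qquad J^0(t,y) := \bbE\bigl[j^*(y + \sigma(B_1-B_t),v)\bigr],
\]
and show that $J := J^0 + u$ solves \eqref{e:J} with $J(1,\cdot)\equiv 0$. The identification $J = J^0 + \ch\log\rho^*$ then falls out of the definition.

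First I would verify that $\rho^*$ is a strictly positive classical $C^{1,2}$ solution on $[0,1)\times\bbR$ of the backward heat equation $\rho^*_t + \frac{\sigma^2}{2}\rho^*_{yy}=0$ with terminal value $\exp(-j^*(\cdot,v)/\ch)$. Positivity is obvious and the regularity is the same Feynman--Kac argument invoked after \eqref{e:usoln}, using Theorem 4.3.6 in \cite{KS}; the two-sided bounds in Lemma \ref{l:rho}, together with Assumption \ref{a:mgf}, supply the local integrability that this theorem needs. A direct computation, exactly as in the passage from \eqref{e:pdeu} to \eqref{e:usoln} and using $\ch=c\sigma^2$ so that the Gaussian quadratic term cancels, then gives
\[
u_t + \frac{\sigma^2}{2}u_{yy} + \frac{u_y^2}{2c} \;=\; \frac{\ch}{\rho^*}\Bigl(\rho^*_t + \frac{\sigma^2}{2}\rho^*_{yy}\Bigr) \;=\; 0,
\]
together with the terminal condition $u(1,y) = -j^*(y,v)$.

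Next I would check that $J^0$ is a $C^{1,2}$ solution of the backward heat equation \eqref{e:J0} and satisfies $J^0_y = H^* - v$ throughout $[0,1]\times\bbR$. Since $j^*(y,v)=\Psi^*(v)+\phi^*(y)-yv$, we may write $J^0(t,y)=\Psi^*(v)+\tilde\phi(t,y)-yv$, where $\tilde\phi(t,y):=\bbE[\phi^*(y+\sigma(B_1-B_t))]$. Differentiation under the integral sign is legitimate because, by Proposition \ref{p:psilb} and Lemma \ref{l:psiub}, $\phi^*$ is $C^1$ with derivative of at most polynomial growth, so the required Gaussian moments are finite. This yields $\tilde\phi_y(t,y)=\bbE[\phi^{*\prime}(y+\sigma(B_1-B_t))]$. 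Because $h^*=\phi^{*\prime}=H^*(1,\cdot)$ (verified after \eqref{e:pdeH*} using the fixed-point identity $T\phi^*=\phi^*$), uniqueness of the heat extension in the class of solutions with the relevant growth gives $\tilde\phi_y\equiv H^*$, and hence $J^0_y = H^* - v$.

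Setting $J := J^0 + u$ and combining the two computations,
\[
J_t + \frac{\sigma^2}{2}J_{yy} \;=\; -\frac{u_y^2}{2c}, \qquad J_y + v - H^* \;=\; (H^*-v) + u_y + v - H^* \;=\; u_y,
\]
so that
\[
J_t + \frac{\sigma^2}{2}J_{yy} + \frac{(J_y + v - H^*)^2}{2c} \;=\; 0,
\]
with $J(1,y) = j^*(y,v) - j^*(y,v) = 0$, proving the claim. The only real technical point is the rigorous justification of differentiation under the integral and of the $C^{1,2}$ regularity of $\rho^*$ and $\tilde\phi$; these rest on Lemma \ref{l:rho}, Assumption \ref{a:mgf}, and the hypothesis \eqref{e:Zui} built into Assumption \ref{a:fpexists}, rather than on any new estimates.
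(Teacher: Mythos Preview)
Your argument is correct and follows exactly the route the paper intends: the proposition is stated there as a direct consequence of the formal decomposition in Section~\ref{s:formal} (the sentence ``These considerations yield the following in view of the PDE \eqref{e:pdeu} that $u$ satisfies''), and you have simply made that computation explicit by verifying the heat equation for $\rho^*$ and $J^0$, the Cole--Hopf identity for $u=\ch\log\rho^*$, and then adding. One small slip: Lemma~\ref{l:psiub} bounds $\Psi$, not $\phi^{*\prime}$; the $C^2$ regularity of $\phi^*$ and the integrability you need are already packaged into Assumption~\ref{a:fpexists} (via \eqref{e:Tphidrv}, \eqref{e:Tphidrv2} and \eqref{e:Zui}), so you can cite that directly instead.
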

As expected from the formal calculations of Section \ref{s:formal}, the function $J$ above can be used to show that an equilibrium is given by the pricing rule $H^*$ and the strategy $\theta^*$ given by 
\be \label{e:theta*}
d\theta^*_t=\sigma^2 \frac{\rho^*_y(t,Y_t,V)}{\rho^*(t,Y_t,V)}dt, t\in [0,1], \mbox{ and } \theta^*_0=0.
\ee
The admissibility condition that is required from any equilibrium candidate is square integrability of the strategy as in \eqref{mm:e:theta_cond_2}. In case of the above candidate this is intimately related to the entropy af an $h$-transform.
\begin{proposition}\label{p:entropy}
	Suppose that Assumption \ref{a:fpexists} holds.  For each $v$, there exists a unique strong solution to 
	\be \label{e:Yeq}
	Y^*_t =\sigma B_t +\sigma^2  \int_0^t\frac{\rho^*_y(s,Y^*_s,v)}{\rho^*(s,Y_s,v)}ds,
	\ee
where $\rho^*$ is given by \eqref{e:rho*}. 

Let $\bbQ^*$ be the law induced by $Y^*$ on the space of continuous functions on $[0,1]$ vanishing at $0$; that is $C_0([0,1])$. Let $(\cB_t)_{t\in [0,1]}$ be the right continuous augmentation of the natural filtration of the coordinate process $X$, and $\bbW$ be the Wiener measure. Then $\bbQ^*$ is given by the following $h$-transform of Brownian motion:
\be \label{e:htransform}
\bbE^{\bbQ^*}[F]=\frac{\bbE^{\bbW} \Big[F \exp\Big(\frac{v\sigma X_1-\phi^*(\sigma X_1)}{\ch}\Big)\Big]}{\bbE^{\bbW} \Big[\exp\Big(\frac{v\sigma X_1-\phi^*(\sigma X_1)}{\ch}\Big)\Big]}, \quad F\in\cB_1.
\ee
Moreover, 
\begin{align} \label{e:entropy}
\frac{1}{2}E^{0,v}\bigg[\sigma^2  \int_0^1\Big(\frac{\rho^*_y(s,Y^*_s,v)}{\rho^*(s,Y_s,v)}ds\Big)^2\bigg]&=H(\bbQ^*||\bbW):= \bbE^{\bbW}\Big[\frac{d\bbQ^*}{d\bbW}\log \frac{d\bbQ^*}{d\bbW}\Big]\\
&=\frac{v(\Psi^*)'(v)-\Psi^*(v)-E^{0,v}[\phi^*(Y^*_1)]}{\ch}<\infty.\nn
\end{align}
\end{proposition}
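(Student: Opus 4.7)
The plan is to identify $\rho^*(t,y,v)$, for fixed $v$, as a space–time harmonic function for the generator of $\sigma$-Brownian motion, so that $\bbQ^*$ arises as Doob's $h$-path transform of the Wiener measure; the SDE \eqref{e:Yeq}, the representation \eqref{e:htransform}, and the entropy identity \eqref{e:entropy} then drop out of Girsanov's theorem. Two ingredients set up the argument: $\rho^*$ solves the backward heat equation $\rho^*_t+\half\sigma^2\rho^*_{yy}=0$ on $[0,1)\times\bbR$ and is strictly positive with $\rho^*(0,0,v)=1$ (the fixed-point identity $T_1\phi^*(v)=\exp(\Psi^*(v)/\ch)$), and Lemma \ref{l:rho} provides two-sided exponential bounds on $\rho^*$.

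For the SDE and \eqref{e:htransform}, I set $M_t:=\rho^*(t,\sigma X_t,v)$ under $\bbW$. Itô applied to $\rho^*$ combined with the heat equation yields $dM_t=\sigma\rho^*_y(t,\sigma X_t,v)\,dX_t$, so $M$ is a positive local martingale with $M_0=1$. The upper bound of Lemma \ref{l:rho} together with \eqref{e:Zui} for $j^*$ in Assumption \ref{a:fpexists} upgrades $M$ to a uniformly integrable martingale, so that $d\bbQ^*/d\bbW|_{\cB_t}:=M_t$ defines a probability. Girsanov then makes $\tilde B_t:=X_t-\int_0^t\sigma(\rho^*_y/\rho^*)(s,\sigma X_s,v)\,ds$ a $\bbQ^*$-Brownian motion, so $\sigma X$ under $\bbQ^*$ satisfies \eqref{e:Yeq} weakly. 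Pathwise uniqueness and strong existence of \eqref{e:Yeq} on each $[0,1-\varepsilon]$ follow from the local Lipschitz character of $\rho^*_y/\rho^*$ (from $\rho^*\in C^{1,2}$ and $\rho^*>0$), and extension to $[0,1]$ is granted by Proposition \ref{p:Zui} applied to $j^*$, which supplies \eqref{e:Zsquareint} and precludes explosion. The formula \eqref{e:htransform} then follows from $M_1=\rho^*(1,\sigma X_1,v)=\exp\bigl(-(\Psi^*(v)+\phi^*(\sigma X_1)-v\sigma X_1)/\ch\bigr)$ on absorbing the $v$-dependent factor $\exp(-\Psi^*(v)/\ch)$ into the normalising denominator.

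For the entropy, $\log M_1=-j^*(\sigma X_1,v)/\ch$ yields
\[
H(\bbQ^*\|\bbW)=\bbE^{\bbQ^*}[\log M_1]=\frac{-\Psi^*(v)+v\,\bbE^{\bbQ^*}[\sigma X_1]-\bbE^{\bbQ^*}[\phi^*(\sigma X_1)]}{\ch}.
\]
Differentiating the identity $\Psi^*(v)=\ch\log\bbE^{\bbW}[\exp((v\sigma X_1-\phi^*(\sigma X_1))/\ch)]$ in $v$ and exchanging differentiation with integration (legitimate by Assumption \ref{a:mgf} together with the Lemma \ref{l:rho} envelopes, which supply locally uniform integrable majorants) produces $(\Psi^*)'(v)=\bbE^{\bbQ^*}[\sigma X_1]$, while the distributional identity of $\sigma X$ under $\bbQ^*$ with $Y^*$ rewrites the final expectation as $E^{0,v}[\phi^*(Y^*_1)]$. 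Finiteness follows because $(\Psi^*)'(v)$ is finite by smoothness of the moment generating function of $V$, and $\bbE^{\bbQ^*}[|\phi^*(\sigma X_1)|]<\infty$ since the weight $\exp(-\phi^*(\sigma X_1)/\ch)$ inherent in $\bbQ^*$ exponentially damps the convex $\phi^*$ at infinity. For the $L^2$ representation, write $M$ in Doléans form $\log M_1=\int_0^1\sigma(\rho^*_y/\rho^*)\,dX_s-\half\int_0^1\sigma^2(\rho^*_y/\rho^*)^2\,ds$, substitute $dX_s=d\tilde B_s+\sigma(\rho^*_y/\rho^*)\,ds$ so that one copy of the quadratic term cancels, and use that the remaining $\tilde B$-integral is a genuine $\bbQ^*$-martingale by \eqref{e:Zsquareint} to conclude $H(\bbQ^*\|\bbW)=\half\bbE^{\bbQ^*}\bigl[\int_0^1\sigma^2(\rho^*_y/\rho^*)^2\,ds\bigr]$, which equals the $E^{0,v}$-expectation in \eqref{e:entropy} after pushing forward along the identification of $\sigma X$ with $Y^*$.

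The principal technical obstacle is the joint upgrading of $M$ to a true martingale and the non-explosion of $Y^*$ on the closed interval $[0,1]$, both of which rely crucially on the integrability hypothesis \eqref{e:Zui} for $j^*$ in Assumption \ref{a:fpexists} and on the exponential envelope of Lemma \ref{l:rho}. A secondary care-point is the interchange of differentiation and integration behind $(\Psi^*)'(v)=\bbE^{\bbQ^*}[\sigma X_1]$, which reduces to Assumption \ref{a:mgf}.
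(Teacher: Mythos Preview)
Your proposal is correct and follows essentially the same route as the paper: both identify $\rho^*(t,\sigma X_t,v)$ as a positive space--time harmonic martingale, invoke Girsanov to obtain the $h$-transform representation \eqref{e:htransform}, and compute the entropy by expanding $\log(d\bbQ^*/d\bbW)$ and recognising $\bbE^{\bbQ^*}[\sigma X_1]=(\Psi^*)'(v)$ via differentiation of the $T_1$-identity (the paper cites \eqref{e:Tpsidrv} for this). The only cosmetic differences are that the paper outsources strong existence/uniqueness of \eqref{e:Yeq} to an external reference rather than arguing via local Lipschitz plus Girsanov, and that you spell out the Dol\'eans--Dade decomposition behind the equality of the $L^2$ drift integral with $H(\bbQ^*\|\bbW)$, which the paper leaves implicit; also, the upgrade of $M$ to a true martingale is immediate from $\rho^*$ being a conditional-expectation (Doob) martingale, so your appeal to Lemma \ref{l:rho} and \eqref{e:Zui} at that particular step is not needed.
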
	
\begin{proof}
Existence of  a unique strong solution to \eqref{e:Yeq} follows from  Theorem 2.2.7 in \cite{DMB-CD} since for each $v$, $\rho^*(\cdot, \cdot,v) \in C^{1,2}([0,1],\bbR)$ and $\rho^*$ never vanishes. Note that the claimed smoothness is valid even at time $t=1$ since $\phi^*$ is smooth by Assumption \eqref{a:fpexists}.  Then, \eqref{e:htransform} follows from Girsanov's theorem after noticing that  $\rho(1, y,v)=\exp(-\frac{\Psi(v)}{\ch}) \exp(\frac{yv -\phi(y)}{\ch})$ so that $\exp(-\frac{\Psi(v)}{\ch})$ drops out in change of measure calculations.

Thus, the equality in \eqref{e:entropy} is a consequence of \eqref{e:htransform} and the definition of relative entropy. To compute its desired expression, note that
\[
\frac{d\bbQ^*}{d\bbW}= \exp\Big(\frac{v\sigma X_1-\phi^*(\sigma X_1)-\Psi^*(v)}{\ch}\Big).
\]
. Thus,
\[
\begin{split}
	H(\bbQ^*||\bbW)&= \bbE^{\bbW}\Big[\frac{v\sigma X_1-\phi^*(\sigma X_1)-\Psi^*(v)}{\ch}\exp\Big(\frac{v\sigma X_1-\phi^*(\sigma X_1)-\Psi^*(v)}{\ch}\Big)\Big]\\
	&=\frac{v(\Psi^*)'(v)-\Psi^*(v)}{\ch}- \bbE^{\bbW}\Big[\frac{\phi^*(\sigma X_1)}{\ch}\exp\Big(\frac{v\sigma X_1-\phi^*(\sigma X_1)-\Psi^*(v)}{\ch}\Big)\Big]\\
	&=\frac{v(\Psi^*)'(v)-\Psi^*(v)-E^{0,v}[\phi^*(Y^*_1)]}{\ch},
\end{split}
\]
where the first equality follows from \eqref{e:Tpsidrv}. The above is finite by the hypothesis of this section that $j^*$ satisfies \eqref{e:Zui}.

\end{proof}

\begin{theorem}\label{t:eq}
	Suppose that Assumption \ref{a:fpexists} holds. Then, $(H^*,\theta^*)$ is an equilibrium where $H^*$ is given by \eqref{e:defH*} and $\theta^*$ follows \eqref{e:theta*}. The total demand in equilibrium follows \eqref{e:Yeq}.  Moreover, the following properties of equilibrium holds:
	\begin{enumerate}
		\item The conditional law of $V$ is given by $\bbP(V\in dv|\cF^{Y^*}_t)= \rho^*(t,Y^*_t,v) \Pi(dv)$, where $\rho^*$ is as in \eqref{e:rho*}.  In particular, $\frac{Y^*}{\sigma}$ is a standard Brownian motion in its own filtration.
		\item Informed trader's expected profit is given by
		\be \label{e:ivaleq}
		E^{0,v}[W^{\theta^*}_1]=J(0,0)=\Psi^*(v)+ E^{0,v}[\phi^*(\sigma B_1)].
		\ee
		\item Suppose further that one can differentiate inside the expectation\footnote{Recall from \eqref{e:pdeH*} that $H(t,Y^*_t)=\bbE[\frac{d\phi^*}{dy}(Y^*_1)|\cF^{Y^*}_t]$.} so that
		\[
		H_y(t,Y^*_t)=\bbE\Big[\frac{d^2\phi^*}{dy^2}(Y^*_1)\big|\cF^{Y^*}_t\Big].
		\]
		Then, the expected loss of noise traders\footnote{That is, assuming they are not subject to additional transaction costs unlike the informed trader.} is given by
		\be \label{e:noiseloss}
		\sigma^2\bbE\Big[\frac{d^2\phi^*}{dy^2}(Y^*_1)\Big].
		\ee
		\item The insider expects to pay the following amount of penalty in equilibrium:
		\be \label{e:entpen}
		\frac{c}{2}E^{0,v}\big[\int_0^1 (\alpha^*_t)^2dt\big]=\ch H(\bbQ^*||\bbW)=v(\Psi^*)'(v)-\Psi^*(v)-E^{0,v}[\phi^*(Y^*_1)],
		\ee
		where $H(\bbQ^*||\bbW)$ is the relative entropy of $\bbQ^*$ with respect to the Wiener measure, where $\bbQ^*$ is the law of the demand process in equilibrium, as given by \eqref{e:Yeq}, from the point of view of the insider.
	\end{enumerate} 
\end{theorem}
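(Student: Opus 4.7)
The plan is to carry out the recipe laid out at the end of Section \ref{s:formal}: verify that the candidate value function
\[
J(t,y):=J^0(t,y)+\ch\log\rho^*(t,y,v),\qquad J^0(t,y):=E^{0,v}\big[j^*(\sigma B_1,v)\,\big|\,\sigma B_t=y\big],
\]
solves the HJB equation \eqref{e:J} (the content of the preceding Proposition) and then use it to verify optimality of the strategy $\theta^*$ from \eqref{e:theta*} against any admissible $\theta$. Admissibility of $(H^*,\theta^*)$ is handled first: $H^*$ defined by \eqref{e:defH*} is $C^{1,2}$ on $[0,1)\times\bbR$ and continuous on $[0,1]\times\bbR$, with differentiation under the integral justified by Lemma \ref{l:rho} and Assumption \ref{a:mgf}; strict monotonicity $H^*_y>0$ follows from the strict convexity of $\phi^*$ in Proposition \ref{p:psilb} propagated by heat-kernel convolution; square-integrability of $\alpha^*$ is Proposition \ref{p:entropy}; and $E^{0,v}\int_0^1 H^*(t,Y^*_t)^2\,dt<\infty$ follows from the Gaussian upper bound in Lemma \ref{l:rho} together with Assumption \ref{a:mgf}.

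The heart of the argument is verification. For an arbitrary admissible $\theta$ with $dY_t=\alpha_t\,dt+\sigma\, dB_t$, applying It\^o's formula to $J(t,Y_t)$ and invoking \eqref{e:J} gives
\[
d\Big[\int_0^t(v-H^*)\alpha_s\,ds-\frac{c}{2}\int_0^t \alpha_s^2\,ds+J(t,Y_t)\Big]=-\frac{c}{2}\Big(\alpha_t-\frac{J_y+v-H^*}{c}\Big)^{2}dt+\sigma J_y\,dB_t.
\]
Integrating over $[0,1]$, using $J(1,\cdot)\equiv 0$, and taking $P^{0,v}$-expectations after localisation yields $E^{0,v}[W^\theta_1]\leq J(0,0)$, with equality precisely at $\alpha=\alpha^*$. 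The fixed-point identity $T\phi^*=\phi^*$ combined with $\phi^*(0)=0$ gives $\rho^*(0,0,v)=1$, hence $J(0,0)=J^0(0,0)=\Psi^*(v)+E^{0,v}[\phi^*(\sigma B_1)]$ (using $E^{0,v}[\sigma B_1]=0$), which is item (2).

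Items (1), (3), and (4) follow once the equilibrium dynamics are in hand. Proposition \ref{p:rhodens} yields $\bbP(V\in dv\,|\,\cF^{Y^*}_t)=\rho^*(t,Y^*_t,v)\Pi(dv)$, whence $E[V\,|\,\cF^{Y^*}_t]=H^*(t,Y^*_t)$ by \eqref{e:defH*}; strict monotonicity of $H^*$ forces $\cF^M=\cF^{Y^*}$, and the computation in Section \ref{s:formal} shows $Y^*/\sigma$ is an $\cF^{Y^*}$-Brownian motion. This is item (1). Item (4) is a direct restatement of Proposition \ref{p:entropy}. For item (3), since $H^*$ satisfies the heat equation and $Y^*/\sigma$ is an $\cF^{Y^*}$-Brownian motion, $H^*_y(t,Y^*_t)$ is a $\bbP$-martingale; hence $E[(\phi^*)''(Y^*_1)]=H^*_y(0,0)$. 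Coupling this with the quadratic covariation $[S,Y^*]_1=\sigma^2\int_0^1 H^*_y(t,Y^*_t)\,dt$ and an integration-by-parts identity relating $\int_0^1 S_t\sigma\, dB_t$ to $V\sigma B_1$ then identifies the expected noise-trader loss as $\sigma^2 E[(\phi^*)''(Y^*_1)]$.

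The main obstacle is making the verification argument tight for \emph{every} admissible $\theta$, which amounts to ruling out slack when the local martingale $\int\sigma J_y\,dB$ is replaced by its localisation limit under $P^{0,v}$-expectation. The growth of $J_y=H^*+\ch\,\rho^*_y/\rho^*$ is controlled by the explicit bounds of Lemma \ref{l:rho} together with Assumption \ref{a:mgf}, and the admissibility requirement \eqref{mm:e:theta_cond_2} provides the integrability on $\alpha$; stitching these estimates together to produce the requisite uniform integrability is the technical crux of the proof.
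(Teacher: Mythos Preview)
Your overall architecture matches the paper's, but there are two substantive points where you diverge and one of them is the crux you yourself flag.

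\textbf{The verification step.} You propose to control the local martingale $\int_0^\cdot \sigma J_y\,dB$ for an \emph{arbitrary} admissible $\theta$ by establishing uniform integrability via growth bounds on $J_y=H^*-v+\ch\,\rho^*_y/\rho^*$. This is the hard road. The paper avoids it entirely by observing that $J\geq 0$ (a one-line Jensen argument, since $J(1,\cdot)=0$ and $u=\ch\log\rho^*$ is the log of a positive quantity while $J^0$ is the heat extension of the convex $j^*$). From \eqref{e:Jver} one then sees that $\int_0^\cdot J_y\sigma\,dB$ is bounded below by an integrable random variable (the running costs are integrable by admissibility), hence is a $P^{0,v}$-supermartingale. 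Fatou gives $E^{0,v}[W^\theta_1]\le J(0,0)$ immediately, with no need for UI or growth estimates on $J_y$ along arbitrary $Y$. Your route would require controlling $\rho^*_y/\rho^*$ evaluated along an \emph{arbitrary} admissible $Y$, which is not covered by the entropy bound of Proposition \ref{p:entropy} (that is specific to $Y^*$). So as stated your ``technical crux'' is genuinely harder than necessary and may not close without the $J\ge 0$ observation.

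\textbf{The integrability of $H^*$ along $Y^*$ and the use of Assumption \ref{a:mgf}.} You invoke Assumption \ref{a:mgf} twice, but the theorem is stated only under Assumption \ref{a:fpexists}; exponential moments of $V$ are not assumed here. The paper obtains $E^{0,v}\int_0^1 H^{*2}(t,Y^*_t)\,dt<\infty$ for \emph{every} $v$ by a different argument: first $\bbE[H^{*2}(t,Y^*_t)]\le \bbE[V^2]<\infty$ gives finiteness for \emph{almost every} $v$; then the $h$-transform representation \eqref{e:htransform} writes $E^{0,v}[\int H^{*2}\,dt]$ as a ratio whose numerator and denominator are convex in $v$, which upgrades ``a.e.\ $v$'' to ``every $v$''. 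This also feeds into showing that $\int J_y\,dB$ is a true martingale when $\theta=\theta^*$, since $J_y=(H^*-v)+\ch\,\rho^*_y/\rho^*$ and both pieces are square-integrable under $P^{0,v}$ along $Y^*$.

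The remaining items (1), (3), (4) are handled essentially as you describe; in particular the noise-trader loss is obtained in the paper via the same integration-by-parts/martingale argument you sketch.
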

\begin{proof}
	\begin{enumerate}
		\item {\em Market efficiency:} Note that given $\theta^*$, $Y^*$ is the unique strong solution of \eqref{e:Yeq}. Then it follows from Propositions \ref{p:rhodens} and \ref{p:psilb} that $\bbP(V\in dv|\cF^{Y^*}_t)= \rho^*(t,Y^*_t,v) \Pi(dv)$ and \eqref{e:Zsquareint} holds with $\rho$ replaced by $\rho^*$ since  $\rho^*(0,0,\cdot)\equiv 1$ by construction. This in particular  implies that  and $\frac{Y^*}{\sigma}$ is a Brownian motion in its own filtration by considering the optional projection of the drift. Moreover, \[
		H^*(t,Y^*_t)=\bbE[V|\cF^{Y^*}_t]
		\]
		by \eqref{e:defH*}. That is, the market efficiency condition of equilibrium is satisfied. 
		\item {\em Insider optimality and value function:} Suppose $H^*$ is the pricing rule of the market makers, and consider an admissible strategy $d\theta_t=\alpha_t dt$. Then, Ito's formula and the PDE \eqref{e:J} yield
		\be \label{e:Jver}
		\begin{split}
			J(u,Y_u)&= J(0,0) +\int_0^u J_y(t,Y_t)\{\sigma dB_t +\alpha_tdt\}-\int_0^u \frac{J_y(t,Y_t) +v -H^*(t,Y_t)}{2c}dt\\
			&= J(0,0) +\int_0^u J_y(t,Y_t)\sigma dB_t-\int_0^u \frac{\big(J_y(t,Y_t) +v -H^*(t,Y_t)-\alpha c\big)^2}{2c}dt\\
			& +\frac{c}{2}\int_0^u \alpha_t^2dt -\int_0^u (v-H^*(t,Y_t))\alpha_t dt.
		\end{split}
		\ee
		First observe that $J\geq 0$ by  a simple application of Jensen's inequality. Moreover, admissibility condition on $\theta$ implies $\alpha$ and $H^*$ satisfies \eqref{mm:e:theta_cond_2}. Thus, the local martingale in \eqref{e:Jver} is bounded from below by an integrable random variable. That is, $(\int_0^u J_y(t,Y_t) dB_t)$ is a $P^{0,v}$ supermartingale. Using the boundary condition of $J$, one obtains
		\[
		E^{0,v}W^{\theta}_1 \leq J(0,0)-E^{0,v}\int_0^u \frac{\big(J_y(t,Y_t +v) -H^*(t,Y_t)-\alpha c\big)^2}{2c}dt\leq J(0,0).
		\]
		Note that the first inequality becomes  equality if $(\int_0^u J_y(t,Y_t) dB_t)$ is a martingale. Moreover, the second term will disappear when $\theta=\theta^*$. Thus, $\theta^*$ is optimal provided it is admissible and  the stochastic integral above is a martingale. In this case, the expected wealth is given by
		\[
		J(0,0)= J^0(0,0) +\ch \log \rho^*(0,0,v)=E^{0,v}[j^*(\sigma B_1,v)].
		\]	
		which coincides with \eqref{e:ivaleq}.
		\item {\em Optimality of $\theta^*$:} Proposition \ref{p:entropy} yields that 
		\be \label{*admis}
		E^{0,v} \Big[\int_0^1\Big(\frac{\rho_y(s,Y_s,v)}{\rho(s,Y_s,v)}\Big)^2ds\Big]<\infty,
		\ee
		for all $v$ in the support of $V$.  Thus, $\theta^*$ is admissible as soon as $E^{0,v}\int_0^1 H^2(t,Y^*_t)dt<\infty$.  
		
		To this end, note that, since $H(t,Y^*_t)=\bbE[V|\cF^{Y^*}_t]$, $\bbE[H^2(t,Y^*_t)]\leq \bbE[V^2]<\infty$. Thus,  $E^{0,v}\int_0^1 H^2(t,Y^*_t)dt$ is finite for almost all $v$. On the other hand, it follows from \eqref{e:htransform} that
		\[
		E^{0,v}\Big[\int_0^1 H^2(t,Y^*_t)dt\Big]= \frac{\bbE^{\bbW} \Big[\int_0^1 \exp\Big(\frac{v\sigma X_1-\phi^*(\sigma X_1)}{\ch}\Big)H^2(t,\sigma X_t)dt \Big]}{\bbE^{\bbW} \Big[\exp\Big(\frac{v\sigma X_1-\phi^*(\sigma X_1)}{\ch}\Big)\Big]}.
		\]
		Note that the denominator is finite for each $v$ by Lemma \ref{l:rho}, and convex in $v$. Moreover, it is larger than $\exp(-\ch^{-1}\bbE^{\bbW}[\phi^*(\sigma X_1)])>0$. Thus, the numerator is finite for almost all $v$. Since it is also convex in $v$, its finiteness for all $v$ follows. Therefore,   $E^{0,v}\int_0^1 H^2(t,Y^*_t)dt<\infty$ for all $v$.
		
		Finally, it remains to show that $(\int_0^u J_y(t,Y_t) dB_t)$ is a $P^{0,v}$-martingale. However, $J_y(t,y,v) = \ch \frac{\rho_y(t,y,v)}{\rho(t,y,v} + H^*(t,y)-v$. Thus, $(\int_0^u J_y(t,Y_t) dB_t)$ is a $P^{0,v}$-martingale since $E^{0,v}\int_0^1 H^2(t,Y^*_t)dt<\infty$ and \eqref{*admis} holds. 
		\item {\em Noise traders loss:} It follows from integration by parts that the wealth of noise traders is given by
		\[
		\sigma B_1(V-H^*(1,Y_1))-\sigma \int_0^1 B_tdH^*(t,Y^*_t)=\sigma B_1V-\sigma \int_0^1 H^*(t,Y^*_t)dB_t-\sigma^2\int_0^1 H^*_y(t,Y^*_t)dt.
		\]
		The admissibility of $\theta^*$ implies in particular that the stochastic integral with respect to $B$ is a martingale. Moreover, $H^*_y(t,Y^*_t)$ is an $\cF^{Y^*}$-martingale. Thus, taking expectations leads to the following expected wealth:
		\[
		-\sigma^2 \bbE[H^*_y(1,Y^*_1)]=-\sigma^2 \bbE\Big[\frac{d^2\phi^*}{dy^2}(Y^*_1)\Big].
		\]
		\item {\em Expecgted penalty in equilibrium:} This is a direct consequence of Proposition \ref{p:entropy} since 
		\[
		\alpha^*_t=  \sigma^2 \frac{\rho^*_y(t,Y_t,V)}{\rho^*(t,Y_t,V)}.
		\]
	\end{enumerate}
\end{proof}
\begin{remark}
	A simple sufficient condition for differentiability under the expectation in item (3) above is if the second derivative of $\phi^*$ has at most polynomial growth. This would be the case if $V$ has a bounded support. Indeed,  by Theorem \ref{t:fpbd} a fixed point of $T$ exists and has bounded first and second derivatives by Proposition \ref{p:psilb}
\end{remark}
An intresting connection between the expected  loss of the noise traders and the price efficiency of equilibrium is given in the next corollary. Recall that in Back \cite{Back92} the equilibrium price is efficient in the sense that  all the private information of the informed trader is fully disseminated to the market by the end of trading horizon. That is, the conditional variance of $V$ given the market information at time $1$ is zero. This is no longer the case in the present model as the market price does not converge to $V$ as $t\rar 1$. The following defines a measure of {\em price inefficiency} of equilibrium and gives its magnitude.
\begin{corollary}
	Consider the equilibrium of Theorem \ref{t:eq}. Then the {\em price inefficiency} of the equilibrium, denoted by $\delta$, is given by
	\be \label{e:EcVar1}
	\delta:=\bbE[\mbox{Var}(V|\cF^{Y^*}_1)]=\ch\bbE\Big[\frac{d^2\phi^*}{dy^2}(Y^*_1)\Big].
	\ee
	Moreover,
	\be \label{e:cvar}
	\mbox{Var}(V|\cF^{Y^*}_t)=\bbE\Big[\ch \frac{d^2\phi^*}{dy^2}(Y^*_1)+(h^*(Y^*_1))^2|\cF^{Y^*}_t\Big]- (H^*(t,Y^*_t))^2.
	\ee
\end{corollary}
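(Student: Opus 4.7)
The plan is to exploit the fact that at time $t=1$ the conditional density of $V$ has the explicit exponential form $\rho^*(1,y,v)=\exp(\ch^{-1}(yv-\phi^*(y)-\Psi^*(v)))$, so that $\phi^*$ plays the role of a cumulant generating function in the variable $y$. Concretely, by Theorem \ref{t:eq}(1) we have $\bbP(V\in dv\mid \cF^{Y^*}_1)=\rho^*(1,Y^*_1,v)\Pi(dv)$, and then \eqref{e:Tphidrv} evaluated at $y=Y^*_1$ together with the relation $h^*=d\phi^*/dy$ (from \eqref{e:pdeH*}) yields $\bbE[V\mid \cF^{Y^*}_1]=h^*(Y^*_1)$.

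Next I would apply the second-derivative identity \eqref{e:Tphidrv2}: the right-hand side is exactly $\ch^{-1}$ times the variance of a random variable with law $\rho^*(1,y,\cdot)\Pi$. Setting $y=Y^*_1$ and rearranging gives
\begin{equation*}
\bbE[V^2\mid \cF^{Y^*}_1]=\ch\,\frac{d^2\phi^*}{dy^2}(Y^*_1)+\bigl(h^*(Y^*_1)\bigr)^2.
\end{equation*}
Subtracting $(\bbE[V\mid \cF^{Y^*}_1])^2=(h^*(Y^*_1))^2$ leaves $\mathrm{Var}(V\mid \cF^{Y^*}_1)=\ch\,\phi^{*\prime\prime}(Y^*_1)$, and taking an unconditional expectation proves \eqref{e:EcVar1}.

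For \eqref{e:cvar} I would use the tower property: $\bbE[V^2\mid \cF^{Y^*}_t]=\bbE[\bbE[V^2\mid \cF^{Y^*}_1]\mid \cF^{Y^*}_t]$, which by the previous display equals $\bbE[\ch\phi^{*\prime\prime}(Y^*_1)+(h^*(Y^*_1))^2\mid \cF^{Y^*}_t]$. Combined with the martingale identity $\bbE[V\mid \cF^{Y^*}_t]=H^*(t,Y^*_t)$ (which is the market efficiency condition from Theorem \ref{t:eq}(1)), this gives the claimed formula via $\mathrm{Var}(V\mid \cF^{Y^*}_t)=\bbE[V^2\mid \cF^{Y^*}_t]-(H^*(t,Y^*_t))^2$.

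The only substantive step is justifying that $\bbE[V^2]<\infty$ so the conditional variance is well-defined and all conditional expectations are integrable; this is immediate from the standing assumption that $V$ has finite variance. There is no serious obstacle here—the argument is essentially a direct translation of the cumulant-generating-function structure inherent in Proposition \ref{p:psilb}(2) into conditional moments of $V$, with the time-$t$ version obtained by a single conditioning step.
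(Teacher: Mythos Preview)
Your proposal is correct and follows essentially the same route as the paper: the paper likewise reads off $\mathrm{Var}(V\mid\cF^{Y^*}_1)=\ch\,\phi^{*\prime\prime}(Y^*_1)$ directly from \eqref{e:Tphidrv2}, takes expectations for \eqref{e:EcVar1}, and then obtains \eqref{e:cvar} from the tower property via $\bbE[V^2\mid\cF^{Y^*}_1]=\ch\,\phi^{*\prime\prime}(Y^*_1)+(h^*(Y^*_1))^2$ together with $\bbE[V\mid\cF^{Y^*}_t]=H^*(t,Y^*_t)$. Your write-up is slightly more explicit but the argument is the same.
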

\begin{proof}
	It follows from \eqref{e:Tphidrv2} that $\frac{d^2\phi^*}{dy^2}(y) =\ch^{-1}\mbox{Var}(V|\cF^{Y^*}_1, Y^*_1=y)$, which yields th first claim.
	
	To show the second claim, it suffices to show that $\bbE[V^2|\cF^{Y^*}_t]=\bbE[\ch \frac{d^2\phi^*}{dy^2}(Y^*_1)+(h^*(Y^*_1))^2|\cF^{Y^*}_t]$. However, this follows from the fact that  $\ch \frac{d^2\phi^*}{dy^2}(Y^*_1) =\mbox{Var}(V|\cF^{Y^*}_1)$ and $\bbE[V|\cF^{Y^*}_1]= h^*(Y^*_1)$.
\end{proof}
\begin{remark}
	Comparing the above to the loss  of noise traders from Theorem \ref{t:eq} reveals the interesting observation that, independent of the distribution of the fundamental value of the asset, the expected loss of noise traders equals $c \delta$, where $\delta$ is the price inefficiency. 
\end{remark}
\subsection{Gaussian case}
Although Theorem \ref{t:fpbd} establishes the existence of a fixed point for $V$ with a bounded support, there is a fixed point when $V$ has a Gaussian distribution. This subsection studies the existence of a  fixed point and the resulting equilibrium under the following assumption:
\begin{assumption}
	\label{a:gaussian}
	The distribution of $V$ is Gaussian, i.e. $V \sim N(\mu,\gamma^2)$.
\end{assumption}
In the Kyle model with Gaussian fundamental value but no transaction costs, the equilibrium price is an affine function of the demand $Y$. Thus, one expects $H^*(t,y)=\lambda y +\mu$ for some $\lambda>0$. The conjecture that $H^*(t,0)=\mu$ is a simple consequence of the rationality of the pricing rule that requires $\mu=\bbE[V]=S_0=H^*(0,0).$ Combined with the normalisation that $\phi^*(0)=0$, one expects that $\phi^*(y)=\frac{\lambda y^2}{2}+\mu y$. This leads to the following:
\begin{theorem} \label{t:fpG}
	Suppose that Assumpton \ref{a:gaussian} holds. Then, $T_0\phi^*=\phi^*$ with $\phi^*(y)=\frac{\lambda^* y^2}{2}+\mu y$ for 
	\be \label{e:lambdaeq}
\lambda^*=\frac{-c +\sqrt{c^2 + 4 \frac{\gamma^2}{\sigma^2}}}{2},
	\ee
	which is the positive root of $\sigma^2 \lambda^2 + \ch \lambda -\gamma^2=0$.
 Moreover, 
 \be \label{e:psigaus}
 \Psi^*(v)=\ch \log T_1\phi^*(v)=\frac{\ch}{2} \log\frac{c}{c+\lambda^*}+\frac{(\mu-v)^2}{2(c+\lambda^*)}.
 \ee
\end{theorem}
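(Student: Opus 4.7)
The plan is to verify the fixed-point claim by direct computation using the Gaussian ansatz, which reduces everything to evaluating two Gaussian integrals and matching coefficients.

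First I would posit $\phi^{*}(y)=\frac{\lambda y^{2}}{2}+\mu y$ with $\lambda>0$ to be determined, consistent with the normalisation $\phi^{*}(0)=0$ and the expected affine form of the equilibrium pricing rule. Plugging this into $T_{1}$ from \eqref{e:identities} gives a Gaussian integrand
\[
T_{1}\phi^{*}(v)=\frac{1}{\sqrt{2\pi\sigma^{2}}}\int_{\bbR}\exp\Bigl(\tfrac{(v-\mu)y}{\ch}-\tfrac{\lambda y^{2}}{2\ch}-\tfrac{y^{2}}{2\sigma^{2}}\Bigr)dy.
\]
Completing the square, the effective variance is $\sigma^{2}\ch/(\ch+\lambda\sigma^{2})=c/(\lambda+c)$ (using $\ch=c\sigma^{2}$), and the evaluation yields exactly
\[
T_{1}\phi^{*}(v)=\sqrt{\tfrac{c}{\lambda+c}}\,\exp\!\Bigl(\tfrac{(v-\mu)^{2}}{2\ch(\lambda+c)}\Bigr),
\]
so $\Psi^{*}=\ch\log T_{1}\phi^{*}$ is automatically of the form \eqref{e:psigaus}. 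No choice of $\lambda$ is needed for this half; the structure of the Gaussian convolution forces $\Psi^{*}$ to be Gaussian-like in $v$.

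Next I would apply $T_{2}$ to $\Psi^{*}$ against $\Pi=N(\mu,\gamma^{2})$. After shifting $v\mapsto v-\mu$, the exponent in $T_{2}\Psi^{*}(y)$ is quadratic in $v$ with coefficient $\frac{1}{\ch(\lambda+c)}+\frac{1}{\gamma^{2}}$, and a second Gaussian integral produces
\[
T_{2}\Psi^{*}(y)=\exp\!\Bigl(\tfrac{\mu y}{\ch}+\tfrac{\gamma^{2}}{2\ch^{2}}\cdot\tfrac{\ch(\lambda+c)}{\gamma^{2}+\ch(\lambda+c)}\,y^{2}\Bigr).
\]
The fixed-point requirement $\ch\log T_{2}\Psi^{*}=\phi^{*}$ amounts to matching the coefficient of $y^{2}/2$, i.e.
\[
\lambda=\tfrac{\gamma^{2}}{\ch}\cdot\tfrac{\ch(\lambda+c)}{\gamma^{2}+\ch(\lambda+c)},
\]
which rearranges to $\sigma^{2}\lambda^{2}+\ch\lambda-\gamma^{2}=0$; taking the positive root gives \eqref{e:lambdaeq}. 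The coefficient of $y$ already matches because $\phi^{*\prime}(0)=\mu=\bbE[V]$, which is consistent with the martingale property and needs no tuning.

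Finally I would observe that $T_{2}\Psi^{*}(0)=1$ with this choice of $\lambda^{*}$, so $T_{0}\phi^{*}=T\phi^{*}=\phi^{*}$ with no renormalisation, and Proposition \ref{p:TT0} closes the loop. The only genuine work is the two Gaussian integrals and the algebraic simplification $\gamma^{2}+\ch(\lambda+c)=\sigma^{2}(\lambda+c)^{2}$ on the quadratic's solution set; there is no real obstacle, just bookkeeping in the exponents. I would also remark, for sanity, that $\lambda^{*}\to\gamma/\sigma$ as $c\downarrow 0$, recovering Kyle's lambda in the penalty-free Gaussian benchmark.
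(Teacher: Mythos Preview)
Your proposal is correct and follows essentially the same route as the paper: posit the quadratic ansatz for $\phi^{*}$, evaluate $T_{1}\phi^{*}$ and then $T_{2}\Psi^{*}$ as Gaussian integrals by completing the square, and match the coefficient of $y^{2}$ to obtain the quadratic $\sigma^{2}\lambda^{2}+\ch\lambda-\gamma^{2}=0$. The paper organises the last step slightly differently, writing out both the quadratic-coefficient condition and the constant-prefactor condition separately and then showing they are equivalent; you instead fix $\lambda^{*}$ from the quadratic match and then observe $T_{2}\Psi^{*}(0)=1$ via the identity $\gamma^{2}+\ch(\lambda+c)=\sigma^{2}(\lambda+c)^{2}$, which is the same verification in reverse order. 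One small point of bookkeeping: your displayed expression for $T_{2}\Psi^{*}(y)$ suppresses the constant prefactor $\sqrt{(c+\lambda)/c}\cdot\Sigma_{0}/\gamma$ that arises from the Gaussian integral; since you explicitly check $T_{2}\Psi^{*}(0)=1$ afterwards this does not affect the argument, but in a written-out proof that constant should appear in the intermediate display before it is shown to equal $1$.
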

\begin{proof}
By completing the squares, it is easily seen that
\[
T_1\phi^*(y)=\frac{\Sigma}{\sigma}\exp\Big(\frac{\Sigma^2(\mu-v)^2}{2 \ch^2}\Big),
\]
where $\frac{1}{\Sigma^2}= \frac{1}{\sigma^2}+\frac{\lambda}{\ch}=\frac{c+\lambda}{\ch}$. That is,
\[
T_1\phi^*(y)=\frac{\Sigma}{\sigma}\exp\Big(\frac{(\mu-v)^2}{2 \ch(c+\lambda)}\Big).
\] 
Thus, to have $T_0\phi^*=\phi^*$, one must have 
\[
\begin{split}
\exp\big(\frac{\lambda y^2+ 2\mu y}{2\ch}\big)&=\frac{\sigma}{\Sigma\gamma}\int_{\bbR}\frac{1}{\sqrt{2 \pi }}\exp\Big(\frac{yv}{\ch}-\frac{(\mu-v)^2}{2 \ch(c+\lambda)}-\frac{(\mu-v)^2}{2 \gamma^2}\Big)dv\\
&=\frac{\sigma\Sigma_0}{\Sigma\gamma}\int_{\bbR}\frac{1}{\sqrt{2 \pi \Sigma_0^2}}\exp\Big(\frac{yv}{\ch}-\frac{(\mu-v)^2}{2\Sigma_0^2)}\Big)dv\\
&=\frac{\sigma\Sigma_0}{\Sigma\gamma}\exp\Big(\frac{y\mu}{\ch}+\frac{y^2 \Sigma_0^2}{2 \ch^2}\Big),
\end{split}
\]
where $\frac{1}{\Sigma_0^2}=\frac{1}{\ch(c+\lambda)}+\frac{1}{\gamma^2}=\frac{\gamma^2+\ch(c+\lambda)}{\gamma^2\ch(c+\lambda)}$. 

Thus, $\lambda$ must solve
\be\label{e:lambda1}
\lambda =\frac{\Sigma_0^2}{\ch}=\frac{\gamma^2(c+\lambda)}{\gamma^2+\ch(c+\lambda)}.
\ee
Note that one must also have
\be \label{e:lambda2}
1=\frac{\sigma^2 \Sigma_0^2}{\Sigma^2\gamma^2}=\frac{\sigma^2(c+\lambda)}{\gamma^2}\lambda.
\ee
That is, $\lambda$ must seemingly  satisfy two constraints at the same time. However, \eqref{e:lambda1} is equivalent to 
\[
\lambda c\sigma^2(c+\lambda)=c\gamma^2.
\]
Dividing both sides by $c$ yields \eqref{e:lambda2}. Thus, $\lambda$ is the unique positive solution of 
\[
\lambda^2\sigma^2 + \lambda c\sigma^2 -\gamma^2=0,
\]
which is given by $\lambda^*$.

Consequently,
\[
\Psi^*(v)=\frac{\ch}{2} \log \frac{\Sigma^2}{\sigma^2} +\frac{(\mu-v)^2}{2(c+\lambda^*)}=\frac{\ch}{2} \log\frac{c}{c+\lambda^*}+\frac{(\mu-v)^2}{2(c+\lambda^*)}.
\]
\end{proof}
The formula for $\lambda^*$ suggests a parametrisation for the cost coefficient $c$ that will be useful when considering the equilibrium with Gaussian fundamental value. The easy proof of the following corollary is left to the reader.
\begin{corollary} \label{c:parametr}
	Let $c=\kappa \frac{\gamma}{\sigma}$ for some $\kappa>0$. Then, $\lambda^*= \Lambda(\kappa)\frac{\gamma}{\sigma}$, where $\Lambda:(0,\infty)\to (0,1)$ is a decreasing function given by
	\[
	\Lambda(\kappa)=\frac{\sqrt{\kappa^2+4}-\kappa}{2}.
	\]
	In particular, $\Lambda(0+)=1=1-\Lambda(\infty)$.
\end{corollary}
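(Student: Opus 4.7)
The plan is to simply substitute the parametrisation $c=\kappa\gamma/\sigma$ into the formula for $\lambda^{*}$ from Theorem \ref{t:fpG} and factor out $\gamma/\sigma$. Writing
\[
\lambda^{*}=\frac{-c+\sqrt{c^{2}+4\gamma^{2}/\sigma^{2}}}{2}
=\frac{\gamma}{\sigma}\cdot\frac{-\kappa+\sqrt{\kappa^{2}+4}}{2}
\]
immediately identifies the multiplicative factor as $\Lambda(\kappa)$, which is the content of the first assertion.

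Next I would verify monotonicity and the boundary values. Differentiating gives $\Lambda'(\kappa)=\tfrac{1}{2}\bigl(\kappa/\sqrt{\kappa^{2}+4}-1\bigr)$, which is strictly negative because $\kappa<\sqrt{\kappa^{2}+4}$; hence $\Lambda$ is strictly decreasing on $(0,\infty)$. For the limits, $\Lambda(0+)=\sqrt{4}/2=1$ is direct, and rationalising the numerator
\[
\Lambda(\kappa)=\frac{\sqrt{\kappa^{2}+4}-\kappa}{2}=\frac{2}{\sqrt{\kappa^{2}+4}+\kappa}
\]
makes $\Lambda(\infty)=0$ transparent, so that $1-\Lambda(\infty)=1=\Lambda(0+)$. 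Since $\Lambda$ is continuous and strictly decreasing from $1$ to $0$, its range is $(0,1)$ as claimed.

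There is essentially no obstacle in this argument; it is purely algebraic and elementary calculus, which is presumably why the author leaves it to the reader. The only stylistic choice is whether to argue monotonicity by differentiation or by observing directly from the rationalised form $2/(\sqrt{\kappa^{2}+4}+\kappa)$ that the denominator is strictly increasing in $\kappa$, which gives the conclusion without any derivative computation.
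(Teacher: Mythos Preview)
Your proof is correct and complete. The paper explicitly leaves this corollary to the reader, so there is no argument to compare against; your direct substitution of $c=\kappa\gamma/\sigma$ into the formula of Theorem \ref{t:fpG} together with the elementary monotonicity and limit computations is precisely the intended verification.
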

Note that  the fixed point of $T_0$ given by the above result satisfies all the conditions of Theorem \ref{t:eq}. Thus, an equilibrium exists and has the following dynamics as a straightforward corollary. To understand the dependency of the equilibrium on the amount of adverse selection $\frac{\gamma}{\sigma}$, the above parametrisation will be employed.
\begin{corollary}\label{c:Gequilibrium}
	Suppose that Assumption \ref{a:gaussian} holds and $c=\kappa \frac{\gamma}{\sigma}$ for some $\kappa>0$. Then $(h^*,\theta^*)$ is an equilibrium, where $h^*(y)=\lambda^*y +\mu$ with $\lambda^*$ from \eqref{e:lambdaeq}, and 
	\be \label{e:alph*G}
	d\theta^*_t= \Lambda(\kappa)\sigma\frac{\frac{V-\mu}{\gamma} -\frac{\Lambda(\kappa)Y^*_t}{\sigma}}{1- t\Lambda^2(\kappa)}dt,
	\ee
	where $\Lambda$ is the function defined in Corollary \ref{c:parametr}. Moreover, 
	\begin{enumerate}
		\item The conditional law of $V$ given $\cF^{Y^*_1}$ is Gaussian with mean $\lambda^* Y^*_t+\mu$ and variance $\gamma^2(1-t\Lambda^2(\kappa))$. Thus,
		\[
		\rho^*(t,y,v)=\frac{1}{\sqrt{1-t\Lambda^2(\kappa)}}\exp\Big(-\frac{1}{2\gamma^2}\Big(\frac{(v-\lambda^*y-\mu)^2}{1-t\Lambda^2(\kappa)}-v^2\Big)\Big)
		\]
		\item Equilibrium demand $Y^*$ follows
		\[
		dY^*_t=\sigma dB_t + \Lambda(\kappa)\sigma\frac{\frac{V-\mu}{\gamma} -\frac{\Lambda(\kappa)Y^*_t}{\sigma}}{1- t\Lambda^2(\kappa)}dt.
		\]
		\item Informed trader's expected profit is given by
		\be \label{e:ivalG}
		E^{0,v}[W^{\theta^*}_1]=\frac{\ch}{2} \log\frac{c}{c+\lambda^*}+\frac{(\mu-v)^2}{2(c+\lambda^*)}+\frac{\lambda^*\sigma^2}{2}.
		\ee
		\item The expected loss of noise traders is given by $\lambda^*\sigma^2=\Lambda(\kappa)\gamma\sigma$.
		\item The price inefficiency  is given by $\delta=\ch \lambda^*=\kappa \Lambda(\kappa)\gamma^2$.  Moreover,
		\[
			\mbox{Var}(V|\cF^{Y^*}_t)=\kappa \Lambda(\kappa)\gamma^2 +\Lambda^2(\kappa)\gamma^2 (1-t)=\gamma^2 (1-t)+t\kappa \Lambda(\kappa)\gamma^2.
		\]
	\end{enumerate}
	
\end{corollary}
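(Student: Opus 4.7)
The plan is to deduce this as a direct consequence of Theorems \ref{t:fpG} and \ref{t:eq}: first verify that the explicit Gaussian fixed point $\phi^*(y)=\lambda^* y^2/2+\mu y$ satisfies Assumption \ref{a:fpexists}, then read off each of the five items from the conclusions of Theorem \ref{t:eq} by exploiting the polynomial form of $\phi^*$ and the quadratic form of $\Psi^*$. Checking Assumption \ref{a:fpexists} is essentially free: $\phi^*$ is a polynomial hence $C^\infty$, so \eqref{e:Tphidrv}-\eqref{e:Tphidrv2} are valid by differentiation under the integral sign; and the integrability \eqref{e:Zui} applied to $j^*(y,v)=\Psi^*(v)+\phi^*(y)-yv$ is immediate since $j^*$ is quadratic in $(y,v)$ while both $\sigma B_1$ and $V$ have Gaussian laws with all polynomial moments finite, so Fubini controls $\int_{f(\bbR)} E^{0,v}[|j^*(\sigma B_1,v)|]\Pi(dv)$.

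For item (1) I would compute $\rho^*(t,y,v)$ from \eqref{e:rho*} directly. Writing $j^*(\sigma B_1,v)=\Psi^*(v)+\lambda^*\sigma^2 B_1^2/2+(\mu-v)\sigma B_1$ and using $\sigma B_1\mid\sigma B_t=y\sim N(y,\sigma^2(1-t))$, the conditional expectation reduces to a one-dimensional Gaussian integral with quadratic exponent, which is evaluated by completing the square. The defining identity $\sigma^2(\lambda^*)^2+\ch\lambda^*=\gamma^2$ from Theorem \ref{t:fpG}, equivalent to $\Lambda(\kappa)(\kappa+\Lambda(\kappa))=1$, is what collapses the resulting variance to $\gamma^2(1-t\Lambda^2(\kappa))$; matching $\rho^*(t,y,v)\Pi(dv)$ against a Gaussian density then yields the claimed posterior $N(\lambda^* y+\mu,\gamma^2(1-t\Lambda^2(\kappa)))$. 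Item (2) is obtained by substituting this $\rho^*$ into \eqref{e:Yeq} and observing that the log-derivative $\sigma^2\rho^*_y/\rho^*$ is linear in $(Y^*_t,v)$, with the affine coefficients determined by the Gaussian mean and variance just computed.

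Items (3)-(5) are then direct substitutions. For (3), I would apply \eqref{e:ivaleq} with $\Psi^*$ from \eqref{e:psigaus} and evaluate $E^{0,v}[\phi^*(\sigma B_1)]=\lambda^*\sigma^2/2$, using that under $P^{0,v}$ the process $B$ is a standard Brownian motion independent of $v$. For (4), Theorem \ref{t:eq}(3) combined with $\phi^{*\prime\prime}\equiv\lambda^*$ yields $\sigma^2\lambda^*=\Lambda(\kappa)\gamma\sigma$ via the parametrisation. For (5), the corollary following Theorem \ref{t:eq} gives $\delta=\ch\lambda^*=\kappa\Lambda(\kappa)\gamma^2$, and \eqref{e:cvar} reduces to $\ch\lambda^*+(\lambda^*)^2\sigma^2(1-t)$ since $\phi^{*\prime\prime}$ is constant and $Y^*/\sigma$ is a Brownian motion in its own filtration; the identity $\Lambda^2+\kappa\Lambda=1$ then shows the two listed forms of $\mbox{Var}(V|\cF^{Y^*}_t)$ agree.

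The main bookkeeping hurdle is the completion-of-squares calculation for $\rho^*$ in item (1), which is elementary but requires carefully tracking two precision parameters (essentially the $\Sigma$ and $\Sigma_0$ already appearing in the proof of Theorem \ref{t:fpG}); fortunately the single algebraic identity $\Lambda^2+\kappa\Lambda=1$ drives every simplification throughout the corollary, so once this identity is isolated the remaining calculations are routine.
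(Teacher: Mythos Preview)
Your proposal is correct and follows essentially the same route as the paper: both deduce everything from Theorems \ref{t:fpG} and \ref{t:eq}, with the only substantive computation being the Gaussian conditional law in item (1). The one minor difference is in how that computation is organised: you evaluate $\rho^*(t,y,v)$ directly from its definition \eqref{e:rho*} as a single Gaussian integral, while the paper first identifies the time-$1$ posterior $\rho^*(1,y,v)\Pi(dv)$ as $N(\lambda^* y+\mu,\ch\lambda^*)$ and then propagates it back to time $t$ by convolving with the Brownian transition kernel and reading off the parameters via characteristic functions. Your direct route is slightly more economical; the paper's two-step version has the mild expository advantage of making the filtering structure (terminal posterior then smoothing) explicit, but both are routine completion-of-squares arguments driven by the same identity $\Lambda^2+\kappa\Lambda=1$.
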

\begin{proof}
	The only claim that is not explicitly given by Theorems \ref{t:eq} and \ref{t:fpG} is the conditional distribution of $V$. Note that the conditonal distribution at time $1$ is given by $\exp(-j^*(y,v)/c)\Pi(dv)$, where $j^*$ is given by \eqref{e:j*}.  Straightforward computations show that this distribution is Gaussian with density proportional to 
	\[
	\exp\Big(-(\mu-v)^2\frac{\gamma^2+\ch(c+\lambda^*)}{2\gamma^2\ch(c+\lambda^*)}+\frac{vy}{\ch}\Big).
	\]
	Since $\frac{\gamma^2(c+\lambda^*)}{\gamma^2+\ch(c+\lambda^*)}=\lambda^*\ch$ by \eqref{e:lambda1}, it follows that the conditional distribution at time $1$ is Gaussian with mean $\lambda^*Y^*_1 +\mu$ and variance $\ch \lambda^*$.  Thus, the conditional density at time $t$ given $Y^*_t=y$ equals
	\[
\int_{\bbR}\frac{1}{\sqrt{2\pi\sigma^2(1-t)}}\exp\Big(-\frac{(y-z)^2}{2 \sigma^2(1-t)}\Big)\frac{1}{\sqrt{2 \pi \ch \lambda^*}}\exp\Big(-\frac{(v-\mu-\lambda^* z)^2}{2 \ch \lambda^*}\Big)dzdv=\rho^*(t,y,v)\Pi(dv).
\]
This will be a Gaussian density as well. Indeed, by considering its characteristic function
\[
\begin{split}
\int_{\bbR}e^{irv}\rho^*(t,y,v)\Pi(dv)&=e^{-\frac{r^2\ch\lambda^*}{2}}\int_{\bbR}\frac{e^{ir(\mu+\lambda^*z)}}{\sqrt{2\pi\sigma^2(1-t)}}\exp\Big(-\frac{(y-z)^2}{2 \sigma^2(1-t)}\Big)dz\\
&=e^{-\frac{r^2}{2}(\ch\lambda^* +(\lambda^*)^2 \sigma^2(1-t))+ir(\mu+\lambda^*y)}.
\end{split}
\]
Therefore,  the conditional distribution is Gaussian with mean $\mu+\lambda^*Y^*_t$ and variance $\ch \lambda^*+ ( \lambda^*)^2\sigma^2(1-t)$.  Using the quadratic equation that $\lambda^*$ solves from Theorem \ref{t:fpG}, the variance in fact equals $\gamma^2- t(\lambda^*\sigma)^2$. Using the density of normal, this readily yields the equilibrium $\theta^*$ using Corollary \ref{c:parametr}.
\end{proof}

The above shows that the insider buys when the market valuation is lower than the fundamental value and sells otherwise as in the Kyle model without penalties. Thus, from the point of view of the insider, prices still mean revert around insider's valuation.  However, the speed of mean reversion as measured by 
\be \label{e:speedins}
r(t):=\frac{\Lambda(\kappa)}{1-t \Lambda^2(\kappa)}
\ee
is considerably smaller than what it would be in  the model without penalties, where the corresponding speed equals $\frac{1}{1-t}$. Thus, the insider trades less aggressively when there is a legal risk. This in particular results in prices not fully revealing the insider's private information at the end of the trading horizon. 

Recall that {\em the Kyle's lambda} equals $\frac{\gamma}{\sigma}$ when there are no penalties. Thus, 
\be \label{e:gainliq}
L(\kappa):=\frac{1}{\Lambda(\kappa)}-1=\frac{1-\Lambda(\kappa)}{\Lambda(\kappa)}
\ee
can be used to  represent the gain in liquidity when the regulators impose a quadratic penalty with rate given by $\kappa \frac{\gamma}{\sigma}$. This gain is clearly decreasing in $\Lambda$, which is a decreasing function of $\kappa$. The market thus becomes perfectly liquid in the limit but it will also be completely inefficient.

Another interesting consequence of the reparametrisation manifests itself in the representation of insider's ex-ante profit. Note that $\Lambda(\kappa)$ is the positive solution of 
\be \label{e:Lambdaeqn}
\Lambda^2(\kappa) +\Lambda(\kappa) \kappa=1.
\ee
\begin{corollary}
	Consider the equilibrium in Corollary \ref{c:Gequilibrium}. Then, the following statements are valid:
	\begin{enumerate}
		\item The ex-ante profit of the insider equals
		\[
	\cW(\kappa):=	\bbE[W^{\theta^*}_1]=\frac{\gamma \sigma}{2}\big(\kappa \log (\kappa \Lambda(\kappa))+2\Lambda(\kappa) \big).
		\]
		Moreover, it is convex and decreasing in $\kappa$ and vanishes in the limit as $\kappa \rar \infty$.
		\item Derivative of $\Lambda(\kappa)$ is given by
		\[
		 \Lambda'(\kappa)=-\frac{\Lambda^2(\kappa)}{1+\Lambda^2(\kappa)}.
		 \]
		\item The total welfare that measures the total expected profit of all traders is given by
		\be \label{e:welfare}
		\frac{\gamma \sigma}{2}\kappa \log (\kappa \Lambda(\kappa))= \frac{\gamma \sigma}{2}\kappa \log (1- \Lambda^2(\kappa)).
		\ee
	\end{enumerate}
\end{corollary}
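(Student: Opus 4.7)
The plan is to handle the three items in the order (2), (1), (3), since item (1) invokes the derivative formula and item (3) is just a linear combination once (1) is known.

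For item (2), implicit differentiation of the defining identity $\Lambda^2(\kappa) + \kappa\Lambda(\kappa) = 1$ from \eqref{e:Lambdaeqn} gives $(2\Lambda(\kappa) + \kappa)\Lambda'(\kappa) + \Lambda(\kappa) = 0$, so $\Lambda'(\kappa) = -\Lambda/(2\Lambda+\kappa)$. Eliminating $\kappa$ via $\kappa = (1-\Lambda^2)/\Lambda$ rewrites $2\Lambda + \kappa = (1+\Lambda^2)/\Lambda$, which produces the claimed expression.

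For item (1), I would take the ex-ante expectation of the per-type profit formula \eqref{e:ivalG}. Since $V \sim N(\mu,\gamma^2)$, the only $v$-dependent term integrates as $\bbE[(\mu-V)^2] = \gamma^2$. Substituting the parametrisations $c = \kappa\gamma/\sigma$, $\lambda^* = \Lambda(\kappa)\gamma/\sigma$, $\ch = \kappa\gamma\sigma$ and applying the consequence $\Lambda(\kappa+\Lambda) = 1$ of \eqref{e:Lambdaeqn} (equivalently $(c+\lambda^*)\Lambda\sigma/\gamma = 1$) reduces $c/(c+\lambda^*)$ to $\kappa\Lambda$ and $\gamma^2/(c+\lambda^*)$ to $\gamma\sigma\Lambda$, yielding the stated closed form. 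For monotonicity and convexity I would differentiate and use item (2): expanding $\frac{d}{d\kappa}[\kappa\log(\kappa\Lambda) + 2\Lambda] = \log(\kappa\Lambda) + 1 + \kappa\Lambda'/\Lambda + 2\Lambda'$ and then substituting $\Lambda' = -\Lambda^2/(1+\Lambda^2)$ together with $\kappa\Lambda = 1-\Lambda^2$ causes the residue $1 + \kappa\Lambda'/\Lambda + 2\Lambda'$ to collapse to $0$, leaving $\cW'(\kappa) = \tfrac{\gamma\sigma}{2}\log(\kappa\Lambda(\kappa)) = \tfrac{\gamma\sigma}{2}\log(1-\Lambda^2(\kappa))$. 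Since $\Lambda(\kappa) \in (0,1)$, this derivative is strictly negative, hence $\cW$ is decreasing; differentiating once more gives $\cW''(\kappa) = -\frac{\gamma\sigma\Lambda\Lambda'}{1-\Lambda^2} > 0$, so $\cW$ is convex. The asymptotic $\Lambda(\kappa) \sim 1/\kappa$ coming from $\kappa\Lambda = 1-\Lambda^2 \to 1$ then yields $2\Lambda \to 0$ and $\kappa\log(\kappa\Lambda) \sim -\kappa\Lambda^2 \sim -1/\kappa \to 0$, so $\cW(\kappa) \to 0$.

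Item (3) is then immediate. Because the market makers compete Bertrand and earn zero in expectation, the total trader profit equals $\cW(\kappa)$ plus noise traders' expected profit, which by Corollary \ref{c:Gequilibrium}(4) equals $-\Lambda(\kappa)\gamma\sigma$. Adding cancels the $2\Lambda$ contribution in the formula for $\cW$ and leaves $\tfrac{\gamma\sigma}{2}\kappa\log(\kappa\Lambda(\kappa))$; the alternative expression follows from $\kappa\Lambda = 1-\Lambda^2$. The only nontrivial step in the whole proof is the algebraic cancellation inside the computation of $\cW'(\kappa)$; everything else reduces to direct substitution guided by the single identity $\Lambda^2 + \kappa\Lambda = 1$.
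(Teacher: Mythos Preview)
Your proof is correct and follows essentially the same route as the paper: implicit differentiation of \eqref{e:Lambdaeqn} for $\Lambda'$, substitution of the parametrisation into \eqref{e:ivalG} using $(c+\lambda^*)\sigma/\gamma=1/\Lambda(\kappa)$ and $\kappa\Lambda=1-\Lambda^2$, the same algebraic collapse yielding $\cW'(\kappa)=\tfrac{\gamma\sigma}{2}\log(1-\Lambda^2(\kappa))$, and the same welfare identification via noise-trader loss. The only cosmetic differences are that the paper writes $\cW''=-\gamma\sigma\Lambda'\kappa^{-1}$ (equivalent to your form via $1-\Lambda^2=\kappa\Lambda$) and handles the $\kappa\to\infty$ limit by L'H\^opital on $\log(1-x^2)/x$ rather than your asymptotic $\kappa\Lambda^2\sim 1/\kappa$.
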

\begin{proof}
	The first and the last claims follow from Corollary \ref{c:Gequilibrium} upon noticing that $(c+\lambda^*)\frac{\sigma}{\gamma}=\kappa+\Lambda(\kappa)= \frac{1}{\Lambda(\kappa)}$ as well as $\kappa \Lambda(\kappa)=1-\Lambda^2(\kappa)$ in view of \eqref{e:Lambdaeqn}.
	
	Differentiating $\cW$ with respect to $\kappa$ and noting $1-\Lambda^2(\kappa)=\Lambda(\kappa)\kappa$ yield
	\be \label{e:Wder}
	 \cW'(\kappa)=\frac{\gamma\sigma}{2}\log(1-\Lambda^2(\kappa)), \quad \cW''(\kappa)=-\gamma\sigma \Lambda'(\kappa)\kappa^{-1}.
	 \ee
	 Moreover, \eqref{e:Lambdaeqn} yields
	 \[
	 \Lambda'(\kappa)= -\frac{\Lambda(\kappa)}{2\Lambda +\kappa}=-\frac{\Lambda(\kappa)}{\Lambda(\kappa) +\frac{1}{\Lambda(\kappa)}}=-\frac{\Lambda^2(\kappa)}{1+\Lambda^2(\kappa)}<0.
	 \]
	 This completes the proof the claims on derivatives. To show that the limiting wealth is $0$, first recall from Corollary \ref{c:parametr} that $\Lambda(\infty)=0$.  On the other hand,
	 \[
	 \kappa \log (\kappa \Lambda(\kappa))=\frac{1-\Lambda^2(\kappa)}{\Lambda(\kappa)}\log(1-\Lambda^2(\kappa)).
	 \]
	 Thus, the proof will be complete if $\lim_{x\rar 0}\frac{\log(1-x^2)}{x}=0$, which follows by a quick application of the L'Hospital rule.
\end{proof}
As $\Lambda(K) \in (0,1)$, the above shows that the welfare is negative. This is no surprise since the market makers are competitive and, therefore, the total welfare equals in magnitude to the expected penalty to the insider.

Similarly, one also expect that the insider's total wealth vanishes as rate of penalties increases without bound. The following result shows that the expected penalty in equilibrium is bounded in $\kappa$. Thus, the vanishing wealth in the limit is not a consequence of large expected costs  but insider trading very little in the presence of legal risk. This is consistent with the representation of the speed of trading via \eqref{e:speedins}. Indeed, $r$ is a decreasing function of $\kappa$ and vanishes as $\kappa \rar \infty$. 

The expected penalty is also not monotone as can be deduced from the following result or seen in Figure \ref{f:penalty}.
\begin{corollary}
Consider the equilibrium in Corollary \ref{c:Gequilibrium}. The insider's expected penalties  in equilibrium equals
\[
P(\kappa)= -\frac{\gamma \sigma}{2}\kappa \log (\kappa \Lambda(\kappa)).
\]
There exists a $\kappa_0$ such that $P$ is concave on $(0,\kappa_0)$ and convex  otherwise.  In particular, it is bounded by $\frac{\gamma \sigma}{2}$. Moreover, $P(0+)=P(\infty)=0$. 
\end{corollary}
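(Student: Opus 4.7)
The identity for $P(\kappa)$ is essentially accounting and I would dispatch it first. By Corollary \ref{c:Gequilibrium}, noise traders expect to lose $\Lambda(\kappa)\gamma\sigma$, market makers break even, and the insider nets $\cW(\kappa)=\frac{\gamma\sigma}{2}(\kappa\log(\kappa\Lambda(\kappa))+2\Lambda(\kappa))$ after paying the penalty. Zero market-maker profit forces the insider's gross trading gains to equal the noise traders' loss, so
\[
P(\kappa)=\Lambda(\kappa)\gamma\sigma-\cW(\kappa)=-\frac{\gamma\sigma}{2}\kappa\log(\kappa\Lambda(\kappa)).
\]

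For the two limits I would use the identity $\kappa\Lambda(\kappa)=1-\Lambda^2(\kappa)$ from \eqref{e:Lambdaeqn}. At $\kappa\to 0^+$, $\Lambda(\kappa)\to 1$ by Corollary \ref{c:parametr}, so $\kappa\log(\kappa\Lambda)=\kappa\log\kappa+\kappa\log\Lambda(\kappa)\to 0$. At $\kappa\to\infty$, $\Lambda(\kappa)\to 0$ and $\kappa\log(1-\Lambda^2)\sim -\kappa\Lambda^2=-(1-\Lambda^2)\Lambda\to 0$, where I also invoke $\kappa=(1-\Lambda^2)/\Lambda$. Next, to obtain the upper bound $\frac{\gamma\sigma}{2}$, I would rewrite
\[
P(\kappa)=\frac{\gamma\sigma}{2}\cdot\frac{-(1-\Lambda^2)\log(1-\Lambda^2)}{\Lambda}
\]
and apply the elementary bound $-\log(1-t)\leq t/(1-t)$ on $(0,1)$ with $t=\Lambda^2$, which yields $-(1-\Lambda^2)\log(1-\Lambda^2)\leq\Lambda^2\leq\Lambda$, hence $P(\kappa)\leq\frac{\gamma\sigma}{2}$.

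The real work is the concave-convex statement. I would differentiate $P$ twice using the chain rule together with the formula $\Lambda'(\kappa)=-\Lambda^2/(1+\Lambda^2)$ from the preceding corollary. The first derivative simplifies, after substituting $\kappa=(1-\Lambda^2)/\Lambda$, to
\[
P'(\kappa)=-\frac{\gamma\sigma}{2}\left[\log(1-\Lambda^2)+\frac{2\Lambda^2}{1+\Lambda^2}\right],
\]
and a careful second differentiation, using the same two substitutions, should compress to
\[
P''(\kappa)=-\gamma\sigma\,\Lambda^3\,\frac{\Lambda^4+4\Lambda^2-1}{(1-\Lambda^2)(1+\Lambda^2)^3}.
\]
All factors other than $\Lambda^4+4\Lambda^2-1$ are strictly positive on $\Lambda\in(0,1)$, so the sign of $P''$ is opposite to that of this bi-quadratic, which is strictly increasing in $\Lambda^2$ and vanishes at the unique positive root $\Lambda^2=\sqrt{5}-2\in(0,1)$. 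Because $\kappa\mapsto\Lambda(\kappa)$ is continuous and strictly decreases from $1$ to $0$ by Corollary \ref{c:parametr}, there is a unique $\kappa_0>0$ with $\Lambda(\kappa_0)=\sqrt{\sqrt{5}-2}$; for $\kappa<\kappa_0$ one has $\Lambda^4+4\Lambda^2-1>0$ and hence $P''<0$, while for $\kappa>\kappa_0$ the reverse gives $P''>0$.

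The main obstacle is the algebraic reduction leading to the compact form of $P''$: the raw derivative has three terms depending on $\kappa$, $\Lambda$, $\Lambda'$ in a tangled way, and only after repeated use of $\kappa\Lambda=1-\Lambda^2$ and $\Lambda'=-\Lambda^2/(1+\Lambda^2)$ does the quotient collapse to a single bi-quadratic in $\Lambda$. Once that identity is in hand, the sign analysis is immediate.
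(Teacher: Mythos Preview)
Your proof is correct and follows essentially the same route as the paper: both derive $P$ from the welfare/accounting identity, use the elementary bound $-\log(1-t)\le t/(1-t)$ for the upper bound, and establish the concave--convex transition by differentiating twice via $\Lambda'=-\Lambda^2/(1+\Lambda^2)$ together with $\kappa\Lambda=1-\Lambda^2$. Your fully reduced expression $P''(\kappa)=-\gamma\sigma\,\Lambda^3(\Lambda^4+4\Lambda^2-1)/[(1-\Lambda^2)(1+\Lambda^2)^3]$ is in fact a bit cleaner than the paper's intermediate form and makes the unique sign change at $\Lambda^2=\sqrt{5}-2$ explicit.
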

\begin{proof} 
	Note that for $x\in (0,1)$,
	\[
	\log(1-x)\geq -\frac{x}{1-x}
	\]
	in view of the mean value theorem.  Thus,
	\[
\frac{1-\Lambda^2(\kappa)}{\Lambda(\kappa)}\log(1-\Lambda^2(\kappa))\geq -\Lambda(\kappa)\geq -1.
	\]
	This shows the bound on $P$. 
	
	Moreover, \eqref{e:Wder} implies
	\[
	P''(\kappa)=\gamma\sigma  (\Lambda'(\kappa)\kappa^{-1}+2 \Lambda''(\kappa))=\gamma\sigma\Big(-\frac{\kappa^{-1}\Lambda^2(\kappa)}{1+\Lambda^2(\kappa)}+4\frac{\Lambda^3(K)}{(1+\Lambda^2(\kappa))^3}\Big),
	\]
	which is positive if and only if
	\[
	\frac{(1+\Lambda^2(\kappa))^2}{\kappa}\leq 4.
	\]
	Note that the left side is  decreasing from $\infty$ to $0$ as $\kappa$ increases to $\infty$.  Thus, there exists a $\kappa_0$ such that $P$ is concave on $(0,\kappa_0)$ and convex otherwise. 
	
	The limiting values at $0$ and $\infty$ have already been observed in computations leading to the respective limits for $\cW$ in the proof of the previous corollary. 
	\end{proof}
	\begin{remark}
		In view of the relationship between relative entropy and expected penalties via \eqref{e:entpen}, the `average' entropy, i.e. $H(\bbQ^*||\bbW)$ integrated with respect to the law of $V$, equals $-\frac{1}{2}\log (\kappa \Lambda(\kappa))=-\frac{1}{2}\log (1-\Lambda^2(\kappa))$. This is increasing as $\kappa\rar 0$, i.e. smaller quadratic penalties are imposed by the regulators.  The limiting value is infinite, which coincides with the relative entropy of the Kyle equilibrium without penalties. Note that when there are no penalties, $Y^*/\sigma$ is a Brownian bridge:
		\[
dY^*_t=\sigma dB_t + \sigma\frac{\frac{V-\mu}{\gamma} -\frac{Y^*_t}{\sigma}}{1- t}dt.
\]
The law of $Y^*$ is then singular with respect to the Wiener measure. 
	\end{remark}
\begin{figure}[h]
	\begin{center}
		\scalebox{0.3}{\includegraphics{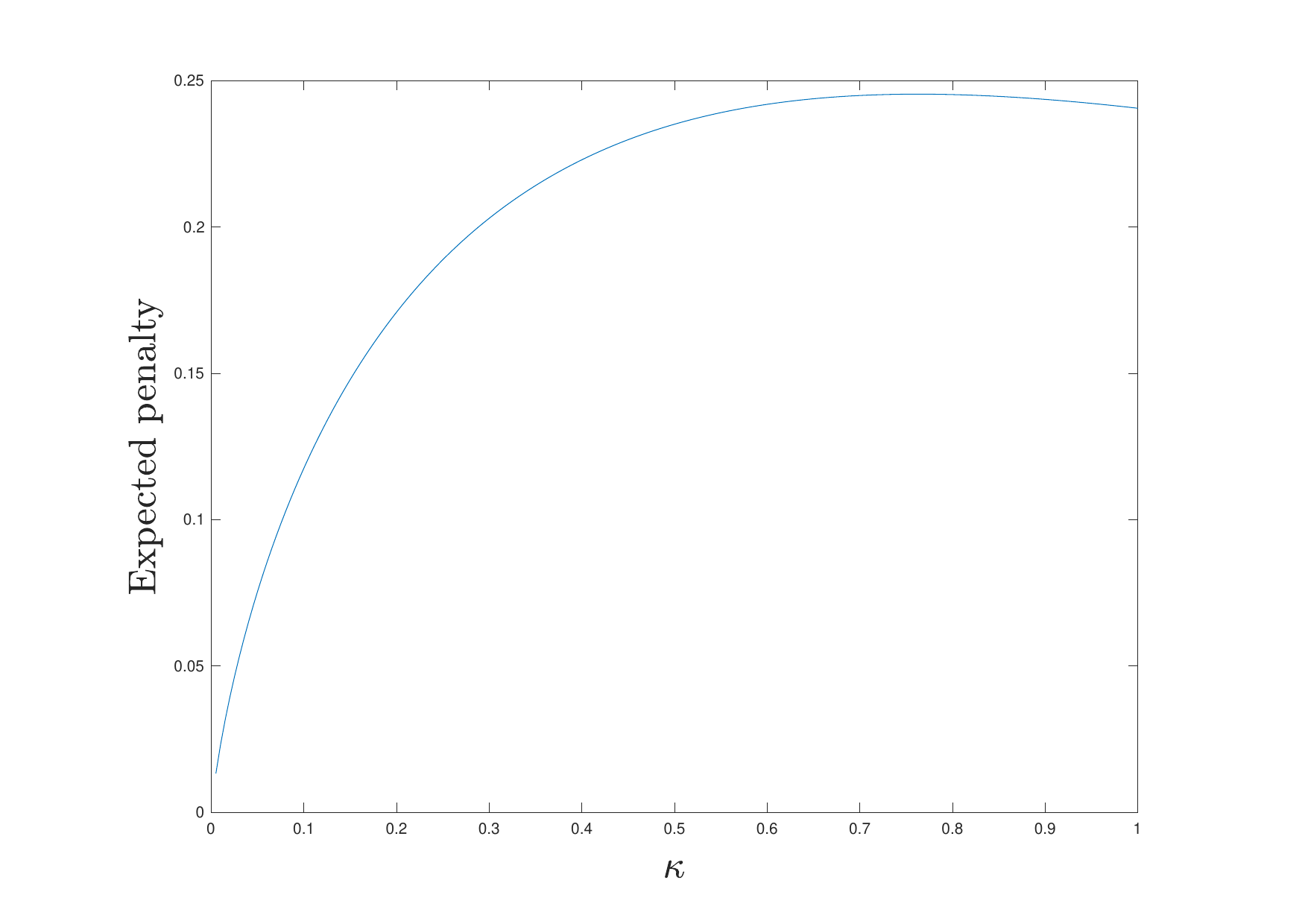}}
	\end{center}
	\caption{Expected penalty paid by the insider in equilibrium as a function of $\kappa$. The values on the $y$-axis are normalised by $\gamma\sigma$. }
	\label{f:penalty}
\end{figure}
Although the expected penalties (hence the total welfare) are non-monotone, the loss of the noise traders and price efficiency decrease as the rate of penalties, i.e. $\kappa$, increase. These can be verified by direct differentiation of the relevant expressions and are illustrated in Figure \ref{f:noise}.
\begin{figure}[h]
	\begin{center}
		$
		\ba{cc}
		\scalebox{0.26}{\includegraphics{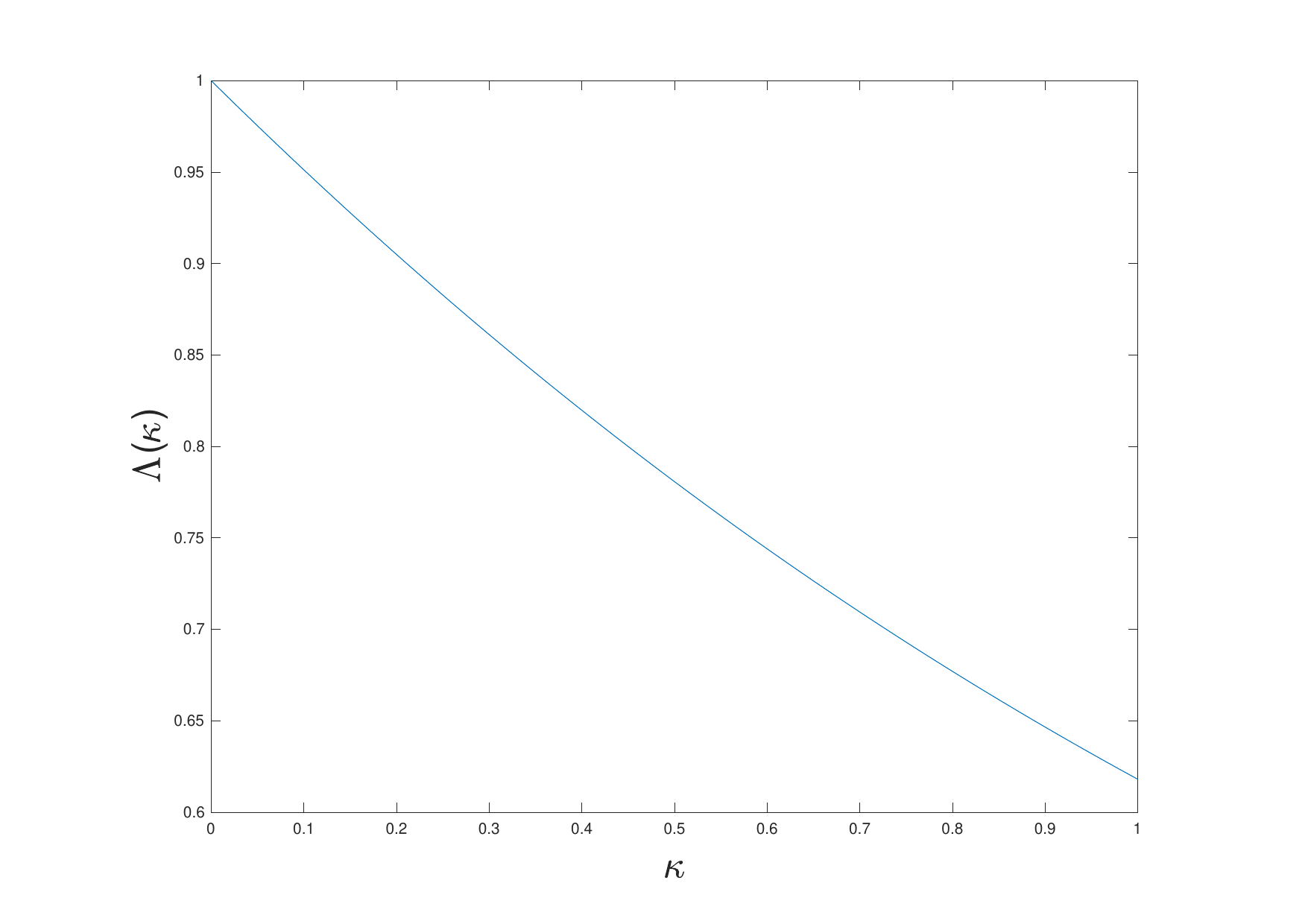}} & \scalebox{0.26}{\includegraphics{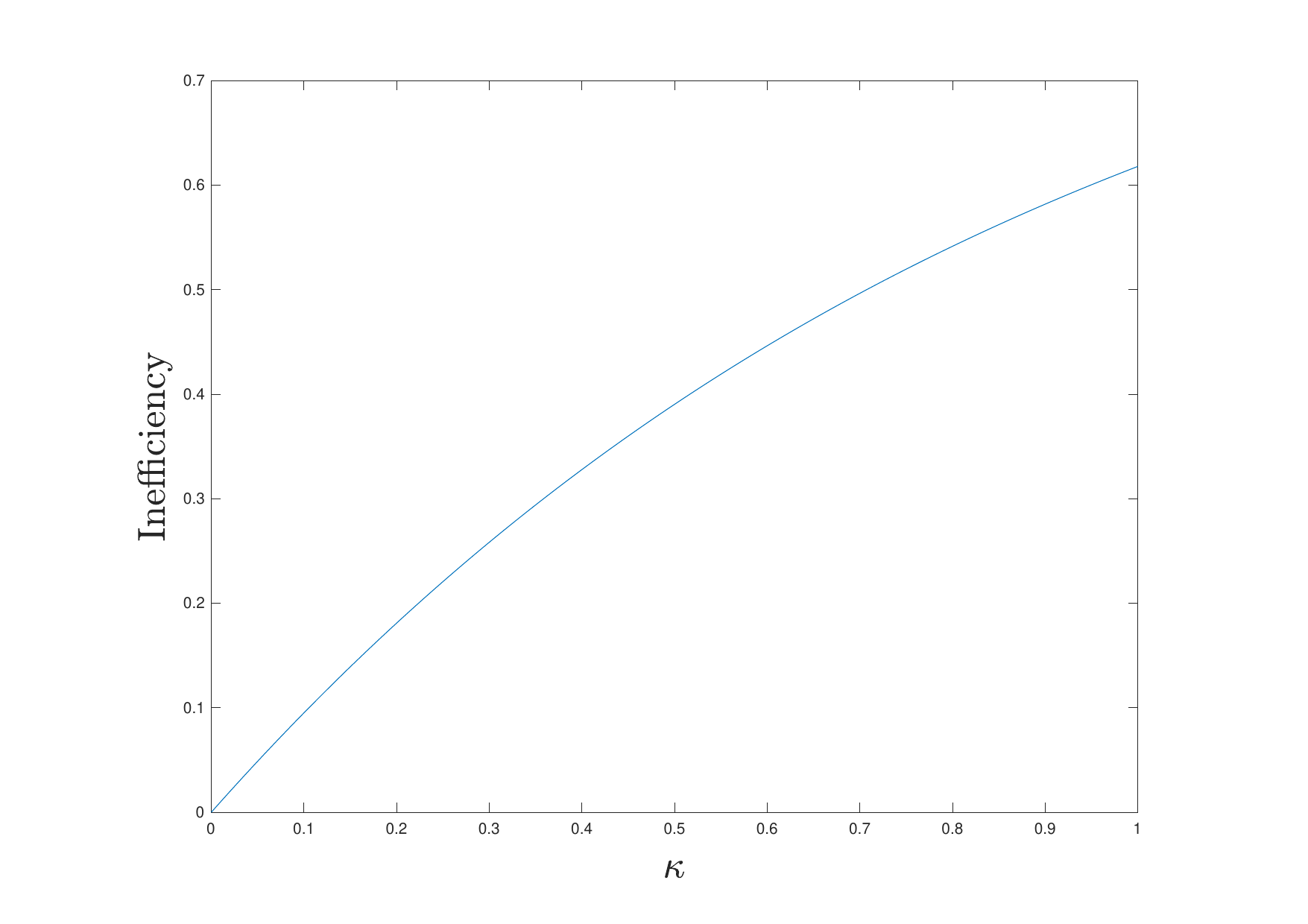}}
		\ea
		$
	\end{center}
	\caption{The left pane shows the expected loss of noise traders normalised by $\gamma\sigma$ whereas the right one is the price inefficiency of equilibrium normalised by $\gamma^2$. }
	\label{f:noise}
\end{figure} 

\section{Conclusion} \label{s:c}
When there are quadratic penalties, the equilibrium in the continuous time version of the Kyle model changes drastically. The insider no longer finds optimal to bring the market prices to her own valuation at the end ot trading. The noise traders lose less on average  as the potential penalties on the insider increase; however,  the total welfare, i.e the sum of insider and noise profit, is not monotone in the rate of penalties when the fundamental value is normally distributed.  Interestingly, the maximum  expected penalty that is accrued by the insider occurs when the rate of penalties is sufficiently small.

The solution of the equilibrium reveals an interesting connection between a class of quadratic BSDEs and h-transforms. The interesting extension to equilibrium with general convex costs are left for future research. Such an extension will be valuable to regulators searching for the optimal penalty structure to control illegal insider trading as discussed by \cite{CCDG} in a one-period setting. 
 \bibliographystyle{plain}
\bibliography{../../ref}
\end{document}